\newcommand{\IC}{\mathbb{C}}
\newcommand{\IR}{\mathbb{R}}
\newcommand{\question}[1]{\leavevmode{\marginpar{\tiny%
			$\hbox to 0mm{\hspace*{-0.5mm}$\leftarrow$\hss}%
			\vcenter{\vrule depth 0.1mm height 0.1mm width \the\marginparwidth}%
			\hbox to 0mm{\hss$\rightarrow$\hspace*{-0.5mm}}$\\\relax\raggedright #1}}}
\renewcommand{\tr}{\mathrm{tr}}
\newcommand{\dom}{\mathrm{Dom}}
\newcommand{\IN}{\mathbb{N}}
\newcommand{\IZ}{\mathbb{Z}}
\newcommand{\Ii}{\mathbbm{i}}
\newcommand{\Id}{{\rm d}}
\newcommand{\p}{\partial}
\newcommand{\one}{\mathbbm{1}}
\theoremstyle{plain}            % body italics
\newtheorem{theorem}{theorem}[section]
\newtheorem{Lemma}[theorem]{Lemma}
\newtheorem{Corollary}[theorem]{Corollary}
\newtheorem{Theorem}[theorem]{Theorem}
\newtheorem{Proposition}[theorem]{Proposition}
\theoremstyle{definition}
\newtheorem{Definition}[theorem]{Definition}
\newtheorem{Hypothesis}[theorem]{Hypothesis}
\newtheorem{Remark}[theorem]{Remark}
\DeclareMathOperator{\ind}{ind}
\title{The Witten Index of massless $(d+1)$-Dirac-Schr\"odinger Operators}
\author{Oliver F\"urst}
\date{24th May 2024}
\begin{document}
	\maketitle
	
	\begin{abstract}
		We calculate the Witten index of a class of (non-Fredholm) Dirac-Schr\"odinger operators over $\IR^{d+1}$ for $d\geq 3$ odd, and thus generalize known results for the case $d=1$. For a concrete example of the potential, we give a more explicit index formula, showing that the Witten index assumes any real number on this class of operators.
	\end{abstract}
	
	\section{Introduction and Main Result}
	
	The (semi-group regularized) partial Witten index of a densely defined operator $T:\dom\ T\subseteq\IC^{r}\otimes H\rightarrow\IC^{r}\otimes H$ for a separable Hilbert space $H$ and $r\in\IN$ is the limit
	\begin{align}
		\mathrm{ind}_{W}\ T:=\lim_{t\to\infty}\tr_{H}\tr_{\IC^{r}}\left(e^{-tT^{\ast}T}-e^{-tTT^{\ast}}\right),
	\end{align}
	whenever the right-hand side exists. In particular, the partial trace $\tr_{\IC^{r}}\left(e^{-tT^{\ast}T}-e^{-tTT^{\ast}}\right)$ needs to be trace-class in $H$ for some $t>0$ (and thus also for any larger $t$). The notion was introduced in originally in \cite{GesSim}, and provides in some cases a finite value although $T$ might be non-Fredholm. However, in cases where both the Fredholm index and the Witten index exist, they coincide. It is well-known (cf. \cite{BolGesGroSchSim}), that $\mathrm{ind}_{W}\ T$ can take any real value. Massless $(1+1)$-Dirac Schr\"odinger operators are the first known class of differential operators for which the Witten index has this property (cf.\cite{BolGesGroSchSim}, \cite{CGLPSZ}, and \cite{CGGLPSZ}).
	In this paper we will generalize the above mentioned result to massless $(d+1)$-Dirac-Schr\"odinger operators for $d\geq 3$ odd, which expands on the proof idea in \cite{BolGesGroSchSim}. Additionally, the calculation of the Witten index relies decisively on a trace formula found in \cite{F2}, which generalizes the classic index theorem by C. Callias \cite{Cal}.
	
	To be more concrete, we consider the differential operator $D_{V}$ over $\IR^{d+1}$, $d\geq 3$ odd, given by
	\begin{align}
		\left(D_{V}f\right)\left(x,y\right):=&i\sum_{j=1}^{d}c^{j}\p_{x^{j}}f\left(x,y\right)+i\p_{y}f\left(x,y\right)+V\left(x,y\right)f\left(x,y\right),\ x\in\IR^{d},\ y\in\IR,\nonumber\\
		f\in& W^{1,2}\left(\IR^{d+1},\IC^{r}\otimes H\right),
	\end{align}
	where $H$ is a complex separable Hilbert space, $\left(c^{j}\right)_{j=1}^{d}$ are Clifford matrices of rank $r$, i.e.
	\begin{align}
		c^{k}c^{l}+c^{l}c^{k}=-2\delta_{kl}\one_{\IC^{r}},\ k,l\in\left\{1,\ldots,d\right\},
	\end{align}
	and the function $V:\IR^{d}\times\IR\rightarrow B_{sa}\left(H\right)$ is bounded, self-adjoint valued, and sufficiently smooth with certain integrability conditions, and Schatten-von Neumann class properties of the first derivatives (for further details see Hypothesis \ref{mainhyp} below).
	
	The following theorem is the principal result of this work:
	
	\begin{Theorem}[Lemma \ref{satisfactionlem}, and Theorem \ref{mainthm}]\label{thethm}
		Let $V\in C_{b}^{3}\left(\IR^{d}\times\IR,B_{sa}\left(H\right)\right)$, where $B_{sa}\left(H\right)$ is equipped with the strong operator topology. Assume furthermore that for $i\in\left\{1,\ldots,d\right\}$ and $k\in\left\{0,1,2\right\}$,
		\begin{align}
			\left\|y\mapsto\p_{x^{i}}\p_{y}^{k}V\left(x,y\right)\right\|_{L^{d}\left(\IR,S^{d}\left(H\right)\right)}&=O\left(\left|x\right|^{-1}\right),\ \left|x\right|\to\infty,\nonumber\\
			\exists\epsilon>0:\ \left\|y\mapsto\p_{R}\p_{y}^{k}V\left(x,y\right)\right\|_{L^{d}\left(\IR,S^{d}\left(H\right)\right)}&=O\left(\left|x\right|^{-1-\epsilon}\right),\ \left|x\right|\to\infty,
		\end{align}
		where $\p_{R}:=\sum_{i=1}^{d}\frac{x^{i}}{\left|x\right|}\p_{x^{i}}$ is the radial derivative, and that for any $\widehat{x}\in S_{1}\left(0\right)$, $x_{0}\in\IR^{d}$, and $\gamma\in\IN_{0}^{d}$ with $\left|\gamma\right|\leq 1$, we have
		\begin{align}
			\lim_{R\to\infty}R^{\left|\gamma\right|}\left\|\left(\p^{\gamma}_{x}V\right)\left(x_{0}+R\widehat{x},\cdot\right)-\left(\p^{\gamma}_{x}V\right)\left(R\widehat{x},\cdot\right)\right\|_{L^{\infty}\left(\IR,B\left(H\right)\right)}=0.
		\end{align}
		Then
		\begin{align}\label{thethmeq1}
			\ind_{W}D_{V}=\frac{1}{2\pi}\left(4\pi\right)^{-\frac{d-1}{2}}\frac{\left(\frac{d-1}{2}\right)!}{d!}\kappa_{c}\int_{\IR^{d}}\tr_{H}\left(\left(U^{V}\right)^{-1}\Id U^{V}\right)^{\wedge d},
		\end{align}
		where $\wedge d$ is the $d$-fold exterior power,
		\begin{align}
			\kappa_{c}:=\tr_{\IC^{r}}\left(c^{1}\cdot\ldots\cdot c^{d}\right),
		\end{align}
		and the unitary $U^{V}$ is given by $U^{V}\left(x\right):=\lim_{n\to\infty}U^{V\left(x,\cdot\right)}\left(n,-n\right)$, $x\in\IR^{d}$, where $U^{A}\left(y_{1},y_{2}\right)$, $y_{1},y_{2}\in\IR$, for a given $A:\IR\rightarrow B_{sa}\left(H\right)$ is the (unique) evolutions system of the equation
		\begin{align}
			u'\left(y\right)=\Ii A\left(y\right) u\left(y\right),\ y\in\IR,
		\end{align}
		i.e. for $y,y_{1},y_{2},y_{3}\in\IR$,
		\begin{align}
			\p_{y_{1}}U^{A}\left(y_{1},y_{2}\right)=&\Ii A\left(y_{1}\right)U^{A}\left(y_{1},y_{2}\right),\\
			\p_{y_{2}}U^{A}\left(y_{1},y_{2}\right)=&-\Ii U^{A}\left(y_{1},y_{2}\right)A\left(y_{2}\right),\\
			U^{A}\left(y_{1},y_{2}\right)U^{A}\left(y_{2},y_{3}\right)=&U^{A}\left(y_{1},y_{3}\right),\\
			U^{A}\left(y,y\right)=&1_{H}.
		\end{align}
		
		With the minimal choice $r=2^{\frac{d-1}{2}}$, and sign convention\footnote{This corresponds to the choice $c^{j}=-\Ii\sigma^{j}$, where $\sigma^{j}$ are classical Pauli matrices.} $\kappa_{c}=\left(-\Ii\right)^{d}\left(2\Ii\right)^{\frac{d-1}{2}}$, we have
		\begin{align}\label{ththmeq2}
			\ind_{W}D_{V}=\left(2\pi\Ii\right)^{-\frac{d+1}{2}}\frac{\left(\frac{d-1}{2}\right)!}{d!}\int_{\IR^{d}}\tr_{H}\left(\left(U^{V}\right)^{-1}\Id U^{V}\right)^{\wedge d}.
		\end{align}
	\end{Theorem}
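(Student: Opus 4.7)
The plan is to split the proof into two stages, reflecting the attribution to Lemma \ref{satisfactionlem} and Theorem \ref{mainthm}. In the first stage I would verify that the concrete analytic hypotheses above imply the preconditions of the general Witten-index formula (Theorem \ref{mainthm}). The key point is existence of $U^{V}(x)=\lim_{n\to\infty}U^{V(x,\cdot)}(n,-n)$ as a unitary on $H$ for every $x\in\IR^{d}$, with enough regularity of $x\mapsto U^{V}(x)$ for the right-hand side of \eqref{thethmeq1} to make sense. Existence follows from Kato-type evolution-system theory given the strong $C_{b}^{3}$-regularity of $V$, and the asymptotic translation-invariance hypothesis stabilizes the limit as $n\to\infty$. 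Differentiating the evolution system in $x$ and invoking the Schatten-$L^{d}$ decay of $\p_{x^{i}}V$ then yields the Schatten-valued regularity $(U^{V})^{-1}\Id U^{V}\in L^{d}(\IR^{d},S^{d}(H)\otimes\Lambda^{1})$ needed to make the integrand in \eqref{thethmeq1} integrable.

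For the second stage (Theorem \ref{mainthm}) the natural viewpoint is that $D_{V}$ is a Callias-type Dirac operator \emph{in the $x$-variables} on $\IR^{d}$, with operator-valued potential $\Phi(x):=\Ii\p_{y}+V(x,\cdot)$ acting on $L^{2}(\IR_{y},H)$; for each fixed $x$, $\Phi(x)$ is self-adjoint, and its ``phase'' encodes the $y$-evolution $U^{V(x,\cdot)}$. Following \cite{BolGesGroSchSim} in the case $d=1$, I would express $\ind_{W}D_{V}$ via a Duhamel expansion of the trace difference $\tr(e^{-tD_{V}^{*}D_{V}}-e^{-tD_{V}D_{V}^{*}})$; the key commutator $[D_{V},D_{V}^{*}]=2\Ii\sum_{j}c^{j}\p_{x^{j}}V$ (computed from the fact that $\Ii\sum c^{j}\p_{x^{j}}$ is skew-adjoint while $\Ii\p_{y}+V$ is self-adjoint, and that $c^{j}$ commutes with $V(x,y)\in B_{sa}(H)$) involves only $\p_{x^{j}}V$, matching the $x$-derivative structure of the target formula. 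Applying the Callias-type trace formula of \cite{F2} then evaluates the long-time limit as a boundary integral over $S^{d-1}_{\infty}\subset\IR^{d}$ of a Chern--Simons form in the unitary phase of $\Phi(x)$; under the asymptotic translation-invariance hypothesis this phase is precisely $U^{V}(x)$. Stokes' theorem converts the boundary integral into $\int_{\IR^{d}}\tr_{H}((U^{V})^{-1}\Id U^{V})^{\wedge d}$, and the universal prefactor $(2\pi)^{-1}(4\pi)^{-(d-1)/2}((d-1)/2)!\,\kappa_{c}/d!$ assembles from Gaussian $x$-integrals in the heat kernel, antisymmetrization combinatorics of the $d$-fold exterior power, and the Clifford trace $\kappa_{c}$. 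The second formula \eqref{ththmeq2} is the specialization $c^{j}=-\Ii\sigma^{j}$.

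The principal obstacle is the non-Fredholm character of $D_{V}$: the Callias-type formula is most naturally a Fredholm statement, but the potential $\Phi(x)=\Ii\p_{y}+V(x,\cdot)$ has continuous spectrum filling $\IR$ and no spectral gap at zero, so $D_{V}$ is typically not Fredholm. One must therefore work in the Witten-index (non-Fredholm) regime, carefully interchanging the $t\to\infty$ limit of the heat trace with the $|x|\to\infty$ asymptotic analysis. The \emph{stronger} radial-derivative decay $\p_{R}\p_{y}^{k}V=O(|x|^{-1-\epsilon})$ (compared with the bare $O(|x|^{-1})$ on $\p_{x^{i}}\p_{y}^{k}V$) is what allows the remainder terms in the Duhamel expansion to be dominated uniformly in $t$, while the $L^{d}$-Schatten hypothesis on $\p_{x^{i}}V$ is precisely what makes the final Chern--Simons integrand an absolutely convergent $L^{1}(\IR^{d})$-form.
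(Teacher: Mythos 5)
Your high-level architecture (verify the hypotheses of the trace formula from \cite{F2}, then compute the $t\to\infty$ limit of the regularized trace) matches the paper, but the mechanism you give for the second stage is not the one that works, and the step you wave through is precisely the heart of the proof. Theorem 2.31 of \cite{F2}, applied here with vanishing boundary contribution, is a \emph{bulk} identity valid for every $t>0$: it writes $\tr\left(e^{-tD_V^{\ast}D_V}-e^{-tD_VD_V^{\ast}}\right)$ as an integral over $\IR^d_x$ and the simplex $\Delta_{d-1}$ of a wedge/shuffle product of $\Id A$ and $e^{-tuA^{2}}$ with $A(x)=\Ii\p_y+V(x,\cdot)$. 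It does not produce a boundary integral over a sphere at infinity of a Chern--Simons form, there is no Stokes step, and your assertion that ``the phase of $\Phi(x)$ is precisely $U^{V}(x)$'' is exactly the nontrivial identification that must be proved, not an input. The paper's route is: (i) the heat kernel of $e^{-t(\Ii\p_y+V(x,\cdot))^{2}}$ is computed explicitly as $q_t\left(y-z\right)U^{V\left(x,\cdot\right)}\left(y,z\right)$ (Lemma \ref{kernellem}); (ii) after rescaling, the Gaussian factors admit a $t$-independent integrable dominant, so $t\to\infty$ passes inside and the simplex integral yields the explicit constant $\left(4\pi\right)^{-\frac{1}{2}}\frac{\left(\frac{d-1}{2}\right)!}{\left(d-1\right)!}$; (iii) the derivative formula $\p_{x^{i}}U^{V}\left(x\right)=\int_{\IR}U^{V\left(x,\cdot\right)}\left(\infty,y\right)\Ii\p_{x^{i}}V\left(x,y\right)U^{V\left(x,\cdot\right)}\left(y,-\infty\right)\Id y$ (Lemma \ref{homotopielem}), together with the extension of the evolution system to $y=\pm\infty$ (Lemma \ref{extendlem}), converts the remaining $y$-integrals of sandwiched wedge products into $\Ii^{-d}\tr_{H}\left(\left(U^{V}\right)^{-1}\Id U^{V}\right)^{\wedge d}$. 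Without analogues of (i)--(iii) your outline cannot produce either the $U^{V}$-winding integrand or the prefactor; ``Gaussian integrals and antisymmetrization combinatorics'' does not substitute for them.

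There are also two inaccuracies in your first stage. The existence of $U^{V}\left(x\right)=\lim_{n\to\infty}U^{V\left(x,\cdot\right)}\left(n,-n\right)$ is not stabilized by the asymptotic translation-invariance hypothesis, which is a condition in $x$ as $\left|x\right|\to\infty$ and says nothing about $y\to\pm\infty$; what is needed is integrability in $y$, namely $V\left(x,\cdot\right)\in L^{1}\left(\IR,B_{sa}\left(H\right)\right)$ (and $\p_{x^{i}}V\left(x,\cdot\right)\in L^{1}$ for the $x$-derivative formula), as in Lemmas \ref{extendlem} and \ref{homotopielem}. Moreover, the actual content of Lemma \ref{satisfactionlem} is the verification of Hypothesis \ref{mainhyp} of \cite{F2}: membership $V\in W^{2,\infty}\left(\IR^{d},\Ii\p\otimes\one_{H},2\right)$, the resolvent-type conditions on $\rho_{z}^{\pm 2}\left(V\right)$, the Schatten seminorm bounds $\tau^{\alpha,\beta,\nabla,d,1}\left(V\right)<\infty$ and $\tau^{\alpha,\beta,\p_{R},d,1+\epsilon}\left(V\right)<\infty$ obtained from the Kato--Seiler--Simon type estimate $\left\|\p_{y}^{k}\nu_{x}V\left(x,\cdot\right)\langle\Ii\p\rangle^{-\alpha}\right\|_{S^{d}}\leq\left(2\pi\right)^{-\frac{1}{d}}\left\|\p_{y}^{k}\nu_{x}V\left(x,\cdot\right)\right\|_{L^{d}\left(\IR,S^{d}\left(H\right)\right)}\left\|\langle\cdot\rangle^{-\alpha}\right\|_{L^{d}\left(\IR\right)}$, and the radial limit condition. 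The decay rates $O\left(\left|x\right|^{-1}\right)$ and $O\left(\left|x\right|^{-1-\epsilon}\right)$ enter through the weights $\langle x\rangle^{1}$ and $\langle x\rangle^{1+\epsilon}$ in these $\tau$-seminorms, not through controlling Duhamel remainders uniformly in $t$, and no ``Schatten-valued regularity of $\left(U^{V}\right)^{-1}\Id U^{V}$'' is required in the argument.
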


	As an example, we consider in section \ref{examplesec} a potential of the form $V\left(x,y\right)=\Ii\phi\left(y\right)\sum_{j=1}^{d}\sigma^{j}F_{j}$, $x\in\IR^{d}$, $y\in\IR$, for functions $F:\IR^{d}\rightarrow\IR^{d}$, $\phi\in C_{c}^{\infty}\left(\IR\right)$ with $\int_{\IR}\phi\left(y\right)\Id y=1$, and $\left(\sigma^{j}\right)_{j=1}^{d}$ Clifford matrices (for more details see Definition \ref{potentialexdef}). Note that in $d=3$ this example covers $su\left(2\right)$-valued potentials. The index formula for this class of potentials then simplifies to
	\begin{Proposition}[Proposition \ref{calclem}]
		For $V$ as in Definition \ref{potentialexdef}, we have
		\begin{align}
			\ind_{W}D_{V}=\left(2\pi\right)^{-\frac{d+1}{2}}\frac{\left(\frac{d-1}{2}\right)!}{d}\int_{\IR^{d}}\frac{\left(\left|F\right|+d-1\right)\left(\cos\left(2\left|F\right|\right)-1\right)^{\frac{d-1}{2}}}{\left|F\right|^{d}}\det\mathrm{D}F\ \Id x,
		\end{align}
		where $\mathrm{D}F$ is the Jacobian of $F$.
	\end{Proposition}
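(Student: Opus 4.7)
My plan is to invoke Theorem \ref{thethm} (in the simplified form \ref{ththmeq2}) for the potential $V(x,y)=\Ii\phi(y)\,M(x)$ with $M(x):=\sum_{j=1}^{d}\sigma^{j}F_{j}(x)$, and to compute the resulting integrand in closed form. Since $M$ is independent of $y$ and $\int_{\IR}\phi=1$, the evolution equation $u'=\Ii V u$ integrates to $U^{V}(x)=\exp(-M(x))$, and the Clifford relation $\{\sigma^{j},\sigma^{k}\}=-2\delta_{jk}1_{H}$ forces $M(x)^{2}=-|F(x)|^{2}\cdot 1_{H}$. Hence
\[
U^{V}(x)=\cos|F(x)|\cdot 1_{H}-\frac{\sin|F(x)|}{|F(x)|}\,M(x).
\]

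I would then treat $\tr_{H}\bigl((U^{V})^{-1}\Id U^{V}\bigr)^{\wedge d}$ as the pullback $F^{\ast}\alpha$ of the top form $\alpha:=\tr_{H}\bigl((g^{-1}\Id g)^{\wedge d}\bigr)$ on $\IR^{d}_{y}$, where $g(y):=\exp(-\sigma\cdot y)$. The pullback produces the factor $\det\mathrm{D}F$ and reduces the problem to the density $h(y)$ defined by $\alpha=h(y)\,\Id y_{1}\wedge\cdots\wedge \Id y_{d}$. The $\mathrm{Spin}(d)$-equivariance $g(Ry)=U_{R}g(y)U_{R}^{-1}$ together with cyclicity of $\tr_{H}$ makes $h$ a function of $|y|$ alone, so it suffices to evaluate at the one-parameter family $y=(f,0,\ldots,0)$.

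This evaluation is the computational core. At $y=(f,0,\ldots,0)$, Duhamel's formula combined with $\sigma^{j}\sigma^{1}=-\sigma^{1}\sigma^{j}$ for $j\neq 1$ gives $g^{-1}\partial_{y_{1}}g=-\sigma^{1}$ and $g^{-1}\partial_{y_{j}}g=-(\sin f/f)\sigma^{j}g$ for $j\neq 1$. Using the identity $\sigma^{j}g=g^{-1}\sigma^{j}$ (valid for $j\neq 1$), the product $\sigma^{j_{2}}g\sigma^{j_{3}}g\cdots\sigma^{j_{d}}g$ telescopes to $\sigma^{j_{2}}\cdots\sigma^{j_{d}}$ via successive $g^{-1}g=1$ cancellations (the $d-1$ copies of $g$ pair up because $d$ is odd). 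Together with cyclic averaging over $S_{d}$, whose cyclic shifts have sign $+1$ for odd $d$, and the Clifford antisymmetrisation $\sum_{\tau\in S_{d-1}}\mathrm{sgn}(\tau)\sigma^{\tau(2)}\cdots\sigma^{\tau(d)}=(d-1)!\,\sigma^{2}\cdots\sigma^{d}$, this yields $h(f)$ as an explicit constant multiple of $(\sin f/f)^{d-1}\kappa_{c}$, where $\kappa_{c}=\tr_{H}(\sigma^{1}\cdots\sigma^{d})=(-\Ii)^{d}(2\Ii)^{(d-1)/2}$ for the minimal Clifford representation.

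The final step combines $\kappa_{c}$ with the prefactor $(2\pi\Ii)^{-(d+1)/2}((d-1)/2)!/d!$ of \ref{ththmeq2} and rewrites $\sin^{d-1}|F|$ via $-2\sin^{2}|F|=\cos(2|F|)-1$ to bring the answer into the stated form. I expect the main obstacle to be matching the pointwise density produced by the direct evaluation, which is proportional to $\sin^{d-1}|F|/|F|^{d-1}$, with the precise weight $(|F|+d-1)/|F|^{d}$ appearing in the statement. Bridging these will require an integration by parts on $\IR^{d}_{x}$ using the radial-divergence identity $\dive\bigl(\Phi(|F|)F/|F|\bigr)=\Phi'(|F|)+(d-1)\Phi(|F|)/|F|$, with careful treatment of boundary contributions at the zero locus of $F$ and at spatial infinity so that the combinatorial $d/(|F|+d-1)$ rearrangement comes out correctly under the decay hypotheses imposed on $F$.
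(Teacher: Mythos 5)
Your reduction is sound up to the last step, and it is a genuinely different route from the paper's: the paper never leaves $x$-space, but instead uses Lemma \ref{commlem}, the closed form $e^{\Ii s\omega}=\cos(s|F|)+\Ii\sin(s|F|)\widehat{\omega}$, the decomposition $(U^{V})^{-1}\Id U^{V}=A\Id\omega-B\eta+B\widehat{\omega}\Id\omega+(\Ii-A)\widehat{\omega}\eta$, an induction on even wedge powers, and Clifford trace identities; you instead pull the universal form on $\IR^{d}_{y}$ back along $F$, use $\mathrm{Spin}(d)$-equivariance to see the density is radial, and evaluate at $y=(f,0,\ldots,0)$. Your pointwise ingredients are all correct: $g^{-1}\p_{y_{1}}g=-\sigma^{1}$, $g^{-1}\p_{y_{j}}g=-(\sin f/f)\,\sigma^{j}g$, the telescoping $\sigma^{a}g\,\sigma^{b}g=\sigma^{a}\sigma^{b}$, and the cyclic grouping (sign $+1$ since $d$ is odd), giving the radial density $h(f)=-d!\,\kappa_{\sigma}\,(\sin f/f)^{d-1}$ for $\tr\bigl((g^{-1}\Id g)^{\wedge d}\bigr)$.

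The genuine gap is the proposed bridge at the end. No integration by parts can convert the weight $d/|F|^{d-1}$ your computation produces into the weight $(|F|+d-1)/|F|^{d}$ of the stated formula: the difference of the two radial densities is a nonzero multiple of $(\cos(2|F|)-1)^{\frac{d-1}{2}}\bigl(|F|^{-(d-1)}-|F|^{-d}\bigr)$, and its integral against $\det\mathrm{D}F\,\Id x=F^{\ast}(\Id y^{1}\wedge\cdots\wedge\Id y^{d})$ does not vanish for all admissible $F$; already for the ball diffeomorphisms of Corollary \ref{finalcor} the two candidate index values are different functions of $r_{C}$ (their linear growth rates as $r_{C}\to\infty$ differ by the factor $d$), so the "boundary term" you hope to control is simply not there. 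Moreover, the discrepancy is not a defect of your evaluation: inserting $\tr\Id\omega^{\wedge d}=\Ii^{d}d!\,\kappa_{\sigma}\det\mathrm{D}F\,\Id x$ and $\tr\,\widehat{\omega}\eta\wedge\Id\omega^{\wedge(d-1)}=\Ii^{d}(d-1)!\,\kappa_{\sigma}\det\mathrm{D}F\,\Id x$ into the paper's own identity (\ref{lemexeq1}), the terms linear in $A$ cancel, and with $|F|^{-1}B=\sin^{2}|F|/|F|^{2}$ and $A^{2}-B^{2}=-\sin^{2}|F|/|F|^{2}$ one obtains exactly $-d!\,\kappa_{\sigma}\sin^{d-1}|F|\,|F|^{-(d-1)}\det\mathrm{D}F\,\Id x$, i.e.\ your density; a brute-force $d=3$ check with Pauli matrices at $|F|=\pi/2$ gives $48/\pi^{2}$, whereas the $(|F|+d-1)/|F|^{d}$ weight would give $32(2+\pi/2)/\pi^{3}$. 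So the weight in the displayed Proposition appears to be an artifact of the final "expanding $A$ and $B$" simplification in the paper's proof of Proposition \ref{calclem}, and your plan, carried out honestly, proves a formula with $d\,(\cos(2|F|)-1)^{\frac{d-1}{2}}/|F|^{d-1}$ in place of $(|F|+d-1)(\cos(2|F|)-1)^{\frac{d-1}{2}}/|F|^{d}$; the integration-by-parts step you defer to is the step that would fail, and you should instead trace the mismatch to that simplification (this also propagates to the constants in Corollary \ref{finalcor}) rather than try to absorb it.
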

	An immediate consequence of the above formula is the invariance under pre-composition with diffeomorphisms of $\IR^{d}$ in the $x$-variable, and the surjectivity of the Witten index on this class of Dirac-Schr\"odinger operators.
	
	\section{Conditions on the potential $V$}
	
	Throughout this paper we assume that $d\geq 3$ is odd. We quote the main Hypothesis of \cite{F2}, and introduce its terminology before we proceed to check that the potential $V$ is amenable.
	
	$A_{0}$ is a self-adjoint operator in a separable complex Hilbert space $X$, and
	\begin{align}
		M_{0}:L^2\left(\IR^{d},\IC^{r}\otimes\dom\ A_{0}\right)\rightarrow L^2\left(\IR^{d},\IC^{r}\otimes X\right)
	\end{align}
	the self-adjoint multiplication operator with $A_{0}$, given by $\left(M_{0}f\right)\left(x\right)=A_{0}f\left(x\right)$. With slight abuse of notation, we also use $M_{0}$ to denote the constant operator family $\left(\IR^{d}\ni x\mapsto A_{0}\right)$. With $\langle x\rangle_{\left(z\right)}$ we denote the Japanese bracket
	\begin{align}
		\langle x\rangle_{z}=\left(\left|x\right|^{2}+z\right)^{-\frac{1}{2}},\ \langle x\rangle=\langle x\rangle_{1},\ z\in\IC\backslash\left(-\infty,0\right].
	\end{align}
	Let $S^{p}\left(X\right)$ denote the $p$-Schatten-von Neumann operators in $X$ for $p\in\left[1,\infty\right)$. Finally $\p_{R}:=\sum_{i=1}^{d}\frac{x^{i}}{\left|x\right|}\p_{x^{i}}$ denotes the radial derivative. 
	
	We have the following notions, encapsulating the regularity and decay conditions of the perturbation $B$, if we set $A=M_{0}+B$.
	
	\begin{Definition}
		For $B=\left(\IR^{d}\ni x\mapsto B\left(x\right)\right)$ a family of operators in $H$ with domain $\dom\ A_{0}$, we write $B\in W^{k,\infty}\left(\IR^{d},A_{0},m\right)$, for $k\in\IN_{0}$, $m\geq 0$, if $\langle A_{0}\rangle^{\beta}B\left(x\right)\langle A_{0}\rangle^{-1-\beta}$, $x\in\IR^{d}$, is a family of closable operators with bounded closures in $H$ for any $\beta\in\left[-m,m\right]$, and $x\mapsto\overline{\langle A_{0}\rangle^{\beta}B\left(x\right)\langle A_{0}\rangle^{-1-\beta}}\in W^{k,\infty}\left(\IR^{d},B\left(X\right)\right)$, where $W^{k,\infty}$ is the $L^{\infty}$-Sobolev space of order $k$, and $B\left(X\right)$ is equipped with the strong operator topology (SOT).
	\end{Definition}
	
	\begin{Definition}
		We introduce the following semi-norms for operators $B\in W^{1,\infty}\left(\IR^{d},A_{0},1\right)$, with $z\in\IC\backslash\left(-\infty,0\right]$,
		\begin{align}\label{rhodefeq1}
			\rho_{z}\left(B\right):=&\left\|\left\|B\langle M_{0}\rangle^{-1}_{z}\right\|_{B\left(X\right)}\right\|_{L^{\infty}\left(\IR^{d}\right)}+\left\|\left\|\langle M_{0}\rangle B\langle M_{0}\rangle^{-1}\langle M_{0}\rangle^{-1}_{z}\right\|_{B\left(X\right)}\right\|_{L^{\infty}\left(\IR^{d}\right)}\nonumber\\
			&+\sum_{i=1}^{d}\left\|\left\|\p_{i}^{E}B\langle M_{0}\rangle^{-1}\langle M_{0}\rangle^{-1}_{z}\right\|_{B\left(X\right)}\right\|_{L^{\infty}\left(\IR^{d}\right)}.
		\end{align}
		For $\beta\in\IR$ denote for $B\in W^{1,\infty}\left(\IR^{d},A_{0},\left|\beta\right|+1\right)$,
		\begin{align}\label{rhodefeq2}
			\rho_{z}^{\beta}\left(B\right):=\rho_{z}\left(\langle M_{0}\rangle^{\beta}B\langle M_{0}\rangle^{-\beta}\right).
		\end{align}
	\end{Definition}
	
	\begin{Definition}
		We introduce a second set of semi-norms for $B\in W^{1,\infty}\left(\IR^{d},A_{0},m\right)$, $m\geq 0$, and the following data: $\alpha\geq 0$, $\beta\in\IR$, with $\left|\beta-1\right|\leq m$, $\left|\beta-\alpha\right|\leq m$, $p\geq 2$, $s\geq 0$, and a unit vector field $\nu\in\Gamma\left(T\IR^{d}\right)$,
		\begin{align}
			\tau^{\alpha,\beta,\nu,p,s}\left(B\right):=\left\|x\mapsto\langle x\rangle^{s}\left\|\langle A_{0}\rangle^{-\alpha+\beta}\left(\nu^{E}B\right)\left(x\right)\langle A_{0}\rangle^{-\beta}\right\|_{S^{p}\left(X\right)}\right\|_{L^{\infty}\left(\IR^{d}\right)},
		\end{align}
		and denote $\tau^{\alpha,\beta,\nabla,p,s}:=\sum_{i=1}^{d}\tau^{\alpha,\beta,\p_{i},p,s}$.
	\end{Definition}
	
	If we want to apply Theorem 2.31 of \cite{F2}, the following Hypothesis needs to hold.
	
	\begin{Hypothesis}[\cite{F2}, Hypothesis 1.18]\label{mainhyp}
		Let $\alpha\geq 1$, and $N\geq\lfloor\frac{\alpha-1}{2}\left(d+1\right)\rfloor+1$. Let $B$ be a family of symmetric operators in $X$, with domains $\dom\ A_{0}$. Assume that $B\langle A_{0}\rangle^{-1}$ is twice continuously differentiable in SOT, and that $B\in W^{2,\infty}\left(\IR^{d},A_{0},2N\right)$. Assume that
		\begin{enumerate}
			\item \begin{align}\label{hypdomeq}
				\lim_{\mathrm{dist}\left(z,\left(-\infty,0\right]\right)\to\infty}\rho_{z}^{-2N}\left(B\right)+\rho_{z}^{2N}\left(B\right)=0,
			\end{align}
			and that for all $\beta\in\left[-2N+\alpha,2N+1\right]$,
			\begin{align}\label{hyptraceeq}
				\tau^{\alpha,\beta,\nabla,d,1}\left(B\right)&<\infty,\nonumber\\
				\exists\epsilon>0:\ \tau^{\alpha,\beta,\p_{R},d,1+\epsilon}\left(B\right)&<\infty.
			\end{align}
			\item Let $S_{1}\left(0\right)$ denote the $\left(d-1\right)$-dimensional unit sphere. For $\phi\in\dom\ A_{0}$, a.e. $y\in S_{1}\left(0\right)$, a.e. $x\in\IR^{d}$, and $\gamma\in\IN_{0}^{d}$ with $\left|\gamma\right|\leq 1$, assume,
			\begin{align}\label{radlimeq}
				\lim_{R\to\infty}R^{\left|\gamma\right|}\left\|\left(\p^{\gamma}B\left(Ry+x\right)-\p^{\gamma}B\left(Ry\right)\right)\phi\right\|_{X}&=0. 
			\end{align}
		\end{enumerate}
	\end{Hypothesis}
	
	We need to check Hypothesis \ref{mainhyp} in the setup of $(d+1)$-Dirac-Schr\"odinger operators with the following identifications:
	\begin{align}\label{identificationeq}
		X=L^2\left(\IR,H\right),\ A_{0}=\Ii\p\otimes\one_{H},\ B=V.
	\end{align}
	
	\begin{Lemma}\label{satisfactionlem}
		Let $V\in C_{b}^{3}\left(\IR^{d+1},B_{sa}\left(H\right)\right)$, where $B_{sa}\left(H\right)$ is equipped with the SOT. Assume furthermore that for $i\in\left\{1,\ldots,d\right\}$ and $k\in\left\{0,1,2\right\}$,
		\begin{align}
			\left\|y\mapsto\p_{x^{i}}\p_{y}^{k}V\left(x,y\right)\right\|_{L^{d}\left(\IR,S^{d}\left(H\right)\right)}&=O\left(\left|x\right|^{-1}\right),\ \left|x\right|\to\infty,\nonumber\\
			\exists\epsilon>0:\ \left\|y\mapsto\p_{R}\p_{y}^{k}V\left(x,y\right)\right\|_{L^{d}\left(\IR,S^{d}\left(H\right)\right)}&=O\left(\left|x\right|^{-1-\epsilon}\right),\ \left|x\right|\to\infty,
		\end{align}
		and that for any $\widehat{x}\in S_{1}\left(0\right)$, $x_{0}\in\IR^{d}$, and $\gamma\in\IN_{0}^{d}$ with $\left|\gamma\right|\leq 1$, we have
		\begin{align}
			\lim_{R\to\infty}R^{\left|\gamma\right|}\left\|\left(\p^{\gamma}_{x}V\right)\left(x_{0}+R\widehat{x},\cdot\right)-\left(\p^{\gamma}_{x}V\right)\left(R\widehat{x},\cdot\right)\right\|_{L^{\infty}\left(\IR,B\left(H\right)\right)}=0.
		\end{align}
		Then Hypothesis \ref{mainhyp} holds for the choice (\ref{identificationeq}) for $\alpha>1$ small enough, such that $N=1$.
	\end{Lemma}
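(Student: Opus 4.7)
The plan is to verify each of the four components of Hypothesis \ref{mainhyp} under the identification (\ref{identificationeq}). The numerical constraint $N\ge\lfloor\tfrac{\alpha-1}{2}(d+1)\rfloor+1$ reduces to $N=1$ whenever $\alpha<1+\tfrac{2}{d+1}$; this is the precise meaning of the phrase \emph{"$\alpha>1$ small enough"}. The key structural observation driving every estimate is that under (\ref{identificationeq}), $\langle A_0\rangle = (1-\p_y^2)^{1/2}$ is a scalar Fourier multiplier in the $y$-variable commuting with every operator acting only on $x$, while $V(x)$ acts as operator-valued multiplication in $y$ on $L^2(\IR, H)$.

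For the regularity requirement $V\in W^{2,\infty}(\IR^d, A_0, 2)$ and the twice-continuous SOT-differentiability of $V\langle A_0\rangle^{-1}$, I use that $[\langle A_0\rangle, V(x,\cdot)]$ is a bounded operator controlled by $\p_y V$. Iterated commutators give boundedness of $\langle A_0\rangle^k V(x)\langle A_0\rangle^{-k-1}$ for $|k|\le 2$, with $W^{2,\infty}$-regularity in $x$ inherited from $V\in C_b^3$; Stein-type complex interpolation on the strip extends this to all $\beta\in[-2,2]$. The same bounds imply (\ref{hypdomeq}): each summand of $\rho_z^{\pm 2}(V)$ factors as a bounded operator times $\langle A_0\rangle^{-1}_z$, whose operator norm is $O(|z|^{-1/2})$ as $\mathrm{dist}(z,(-\infty,0])\to\infty$.

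The main obstacle is the trace condition (\ref{hyptraceeq}). I would invoke an operator-valued Birman--Solomjak / Cwikel-type estimate
\begin{align*}
\|m(y)\phi(\Ii\p_y)\|_{S^d(L^2(\IR, H))}\le C_d\,\|m\|_{L^d(\IR, S^d(H))}\,\|\phi\|_{L^d(\IR)},
\end{align*}
applied with $\phi(\xi)=(1+\xi^2)^{-1/2}\in L^d(\IR)$ for $d\ge 2$. Using the regularity step to commute outer $\langle A_0\rangle^\gamma$-factors past $(\p_{x^i}V)(x,\cdot)$ at the cost of bounded conjugations (for $|\gamma|\le 3$), and using $\|\langle A_0\rangle^{1-\alpha}\|_{\mathrm{op}}\le 1$ for $\alpha\ge 1$ to absorb the excess power, one obtains
\begin{align*}
\|\langle A_0\rangle^{-\alpha+\beta}(\p_{x^i}V)(x)\langle A_0\rangle^{-\beta}\|_{S^d}\le C\,\|y\mapsto(\p_{x^i}V)(x,y)\|_{L^d(\IR, S^d(H))}
\end{align*}
for every $\beta\in[-2+\alpha,3]$, and analogously for $\p_R$. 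Multiplying by $\langle x\rangle$ (respectively $\langle x\rangle^{1+\epsilon}$ in the $\p_R$-version) and invoking the assumed $O(|x|^{-1})$- and $O(|x|^{-1-\epsilon})$-decay delivers the required finiteness of $\tau^{\alpha,\beta,\nabla,d,1}(V)$ and $\tau^{\alpha,\beta,\p_R,d,1+\epsilon}(V)$.

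The radial-limit condition (\ref{radlimeq}) is immediate: the assumed convergence in $\|\cdot\|_{L^\infty(\IR, B(H))}$ implies strong convergence on every fixed $\phi\in\dom A_0\subset L^2(\IR, H)$ via $\|M\phi\|_{L^2(\IR,H)}\le\|M\|_{L^\infty(\IR, B(H))}\|\phi\|_{L^2(\IR,H)}$. The delicate point throughout is the precise control of non-integer powers of $\langle A_0\rangle$; the condition $\alpha>1$ provides exactly the slack needed so that the extra factor of $\langle A_0\rangle$ beyond the single $\langle A_0\rangle^{-1}$ used for $L^d$-summability can be absorbed into bounded conjugation operators.
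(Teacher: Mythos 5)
Your overall strategy is the same as the paper's: reduce the regularity and $\rho_z$-conditions to integer powers of $\langle A_0\rangle=(1-\p_y^2)^{1/2}$ plus adjoints/complex interpolation, use the vanishing of the $\langle M_0\rangle_z$-factor for (\ref{hypdomeq}), verify (\ref{hyptraceeq}) by an operator-valued Kato--Seiler--Simon/Cwikel bound (this is exactly Lemma 1.15 of \cite{F2}, quoted in the paper's proof), and dispatch (\ref{radlimeq}) by the trivial $L^\infty(\IR,B(H))\to B(L^2(\IR,H))$ estimate. Your identification of the constraint $\alpha<1+\tfrac{2}{d+1}$ behind ``$\alpha>1$ small enough'' is correct and more explicit than the paper.

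However, the step reducing general $\beta\in[-2+\alpha,3]$ to the Cwikel-type bound has a genuine flaw. You claim
\begin{align*}
\left\|\langle A_0\rangle^{-\alpha+\beta}(\p_{x^i}V)(x)\langle A_0\rangle^{-\beta}\right\|_{S^d}\le C\left\|y\mapsto(\p_{x^i}V)(x,y)\right\|_{L^d(\IR,S^d(H))},
\end{align*}
justified by commuting the outer $\langle A_0\rangle$-powers past $\p_{x^i}V$ ``at the cost of bounded conjugations'' and absorbing the excess power. This does not work as stated: after a bounded conjugation you are left with $\bigl(\langle A_0\rangle^{-\alpha+\beta}(\p_{x^i}V)\langle A_0\rangle^{\alpha-\beta}\bigr)\langle A_0\rangle^{-\alpha}$, and you cannot estimate this in $S^d$ by (operator norm)$\times$(Schatten norm), since $\langle A_0\rangle^{-\alpha}$ alone is not compact, let alone in $S^d(L^2(\IR,H))$; moreover the conjugated factor is no longer a multiplication operator in $y$, so the Kato--Seiler--Simon estimate cannot be applied to it directly. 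The correct route (and the paper's) is to reduce by adjoints and interpolation to the endpoints $\beta=\alpha$ and $\beta=2+\alpha$, conjugate at the upper endpoint by the \emph{integer} power $(1-\p_y^2)+\text{const}$, and expand by the Leibniz rule: this produces multiplication operators by $\p_y^k\p_{x^i}V$, $k=0,1,2$, each still paired with a bounded Fourier multiplier times $\langle\xi\rangle^{-\alpha}\in L^d(\IR)$, so the Cwikel-type bound applies term by term and yields control by $\sum_{k=0}^{2}\|\p_y^k\p_{x^i}V(x,\cdot)\|_{L^d(\IR,S^d(H))}$. This is precisely why the Lemma assumes the decay for $k\in\{0,1,2\}$ --- hypotheses your proposal never actually uses, which is the symptom of the gap. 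With that repair the conclusion is unaffected, since the assumed $O(|x|^{-1})$ and $O(|x|^{-1-\epsilon})$ decay holds for all $k\le 2$, and the remainder of your argument agrees with the paper.
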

	
	\begin{proof}
		Throughout the proof we will use the coordinates $\left(x,y\right)\in\IR^{d}\times\IR=\IR^{d+1}$. We first need to check that $V\in W^{2,\infty}\left(\IR^{d},\Ii\p\otimes\one,2\right)$, which means that
		\begin{align}\label{satisfactionlemeq1}
			x\mapsto\overline{\langle\Ii\p\otimes\one_{H}\rangle^{\beta}V\left(x,\cdot\right)\langle\Ii\p\otimes\one_{H}\rangle^{-1-\beta}}\in W^{2,\infty}\left(\IR^{d},B\left(L^2\left(\IR,H\right)\right)\right),\ \beta\in\left[-2,2\right].
		\end{align}
		Because $V$ is self-adjoint valued, (\ref{satisfactionlemeq1}) holds by taking adjoints and complex interpolation, if
		\begin{align}
			x\mapsto\left(\Delta\otimes\one_{H}+1\right)V\left(x,\cdot\right)\left(\Delta\otimes\one_{H}+1\right)^{-1}\in W^{2,\infty}\left(\IR^{d},B\left(L^2\left(\IR,H\right)\right)\right),
		\end{align}
		which is satisfied by the Leibniz rule for $V\in C^{2}_{b}\left(\IR^{d+1},B_{sa}\left(H\right)\right)$. Next, for $z\in\IC\backslash\left(-\infty,0\right]$, we need to verify
		\begin{align}\label{satisfactionlemeq2}
			\lim_{\mathrm{dist}\left(z,\left(-\infty,0\right]\right)\to\infty}\rho_{z}^{-2}\left(V\right)+\rho_{z}^{2}\left(V\right)=0.
		\end{align}
		The operator functions $\langle M_{0}\rangle^{\beta}V\langle M_{0}\rangle^{-\beta-1}$, $\langle M_{0}\rangle^{\beta+1}V\langle M_{0}\rangle^{-\beta-2}$, and $\langle M_{0}\rangle^{\beta}\p_{x^{i}}V\langle M_{0}\rangle^{-\beta-1}$ for $i\in\left\{1,\ldots,d\right\}$ are in $L^{\infty}\left(\IR^{d},B\left(L^2\left(\IR,H\right)\right)\right)$ for $\beta\in\left[-2,2\right]$, because $V\in C_{b}^{3}\left(\IR^{d+1},B_{sa}\left(H\right)\right)$ by same reasoning as in first part of the proof. Spectral mapping implies that $\left\|\langle M_{0}\rangle_{z}\right\|_{L^{\infty}\left(\IR^{d},B\left(H\right)\right)}\to 0$, as $\mathrm{dist}\left(z,\left(-\infty,0\right]\right)\to\infty$, multiplying out this factor in (\ref{rhodefeq1}) and (\ref{rhodefeq2}), we obtain the claimed property (\ref{satisfactionlemeq2}).
		
		Now let $\nu$ be a unit vector field in $\IR^{d}$. The property $\tau^{\alpha,\beta,\nu,d,s}\left(V\right)<\infty$, for $\beta\in\left[-2+\alpha,3\right]$, and $s\geq 0$, means that
		\begin{align}\label{satisfactionlemeq3}
			x\mapsto\langle x\rangle^{s}\left\|\langle\Ii\p\otimes\one_{H}\rangle^{-\alpha+\beta}\nu_{x}V\left(x,\cdot\right)\langle\Ii\p\otimes\one_{H}\rangle^{-\beta}\right\|_{S^{d}\left(L^2\left(\IR,H\right)\right)}\in L^{\infty}\left(\IR^{d}\right).
		\end{align}
		Again, by taking adjoints and complex interpolation, it suffices to check (\ref{satisfactionlemeq3}) for $\beta=3$ and $\beta=\alpha$, which in turn is satisfied if (\ref{satisfactionlemeq3}) holds for $\beta=2+\alpha$ and $\beta=\alpha$, since $\alpha>1$. We first discuss $\beta=\alpha$. In this case we have to consider
		\begin{align}
			\left\|\nu_{x}V\left(x,\cdot\right)\langle\Ii\p\otimes\one_{H}\rangle^{-\alpha}\right\|_{S^{d}\left(L^2\left(\IR,H\right)\right)}.
		\end{align}
		Now we treat $\beta=2+\alpha$. We have to consider
		\begin{align}
			\left(\left(\Delta\otimes\one_{H}+1\right)\nu_{x}V\left(x,\cdot\right)\left(\Delta\otimes\one_{H}+1\right)^{-1}\right)\langle\Ii\p\otimes\one_{H}\rangle^{-\alpha}
		\end{align}
		By using product rule, and since $\left(\Delta\otimes\one_{H}+1\right)^{-t}$ is norm-bounded by $1$ for $t\geq 0$, we have some global constant $C$ such that
		\begin{align}
			&\left\|\left(\left(\Delta\otimes\one_{H}+1\right)\nu_{x}V\left(x,\cdot\right)\left(\Delta\otimes\one_{H}+1\right)^{-1}\right)\langle\Ii\p\otimes\one_{H}\rangle^{-\alpha}\right\|_{S^{d}\left(L^2\left(\IR,H\right)\right)}\nonumber\\
			\leq& C\sum_{k=0}^{2}\left\|\nu_{x}\p_{y}^{k}V\left(x,\cdot\right)\langle\Ii\p\otimes\one_{H}\rangle^{-\alpha}\right\|_{S^{d}\left(L^2\left(\IR,H\right)\right)}.
		\end{align}
		We note that also the case $\beta=\alpha$ is bounded by the right hand side estimate. Since $d\geq 3$, we may apply Lemma 1.15 in \cite{F2}, which is a slight generalization of Theorem 4.1 in \cite{Simon}. It follows that
		\begin{align}
			\left\|\nu_{x}\p_{y}^{k}V\left(x,\cdot\right)\langle\Ii\p\otimes\one_{H}\rangle^{-\alpha}\right\|_{S^{d}\left(L^2\left(\IR,H\right)\right)}\leq\left(2\pi\right)^{-\frac{1}{d}}\left\|\nu_{x}\p_{y}^{k}V\left(x,\cdot\right)\right\|_{L^{d}\left(\IR,S^{d}\left(H\right)\right)}\left\|\xi\mapsto\langle\xi\rangle^{-\alpha}\right\|_{L^{d}\left(\IR\right)}.
		\end{align}
		Since $\left\|\xi\mapsto\langle\xi\rangle^{-\alpha}\right\|_{L^{d}\left(\IR\right)}$ is finite, we obtain
		\begin{align}
			\tau^{\alpha,\beta,\nabla,d,1}\left(V\right)&<\infty,\nonumber\\
			\exists\epsilon>0:\ \tau^{\alpha,\beta,\p_{R},d,1+\epsilon}\left(V\right)&<\infty,
		\end{align}
		by the prerequisites on $V$.
		
		It remains to check the radial limit property, which follows immediately from estimating
		\begin{align}
			&\left\|\left(\p_{x}^{\gamma}V\left(x_{0}+R\widehat{x},\cdot\right)-\p_{x}^{\gamma}V\left(x_{0}+R\widehat{x},\cdot\right)\right)\phi\right\|_{L^2\left(\IR,H\right)}\nonumber\\
			\leq&\left\|\left(\p^{\gamma}_{x}V\right)\left(x_{0}+R\widehat{x},\cdot\right)-\left(\p^{\gamma}_{x}V\right)\left(R\widehat{x},\cdot\right)\right\|_{L^{\infty}\left(\IR,B\left(H\right)\right)}\left\|\phi\right\|_{L^2\left(\IR,H\right)},
		\end{align}
		for any $\phi\in W^{1,2}\left(\IR,H\right)$, $x_{0}\in\IR^{d}$, $\widehat{x}\in S_{1}\left(0\right)$, and $R>0$.
	\end{proof}
	
	\section{Proof of the Main Theorem}

	To apply the trace formula in \cite{F2} to the operator $D_{V}$, we need to establish the integral kernel of the semi-group generated by $\left(\Ii\p\otimes\one_{H}+V\right)^{2}$ in $L^2\left(\IR,H\right)$. To do so we use the theory of evolution systems. A treaty of evolution systems, including the properties in the Definition below, is for example displayed in chapter 4 of \cite{Paz}.
	
	\begin{Definition}
		Let $H$ be a separable complex Hilbert space. Let $\IR\ni x\mapsto A\left(x\right)\in B_{sa}\left(H\right)$ a continuous function in the SOT of $H$. Denote by $U^{A}$ the (unitary) evolution system of the $H$-valued equation
		\begin{align}
			u'\left(x\right)=\Ii A\left(x\right)u\left(x\right),\ x\in\IR,
		\end{align}
		i.e. for $x,x_{1},x_{2},x_{3}\in\IR$,
		\begin{align}\label{evolutiondefeq1}
			\p_{x_{1}}U^{A}\left(x_{1},x_{2}\right)=\Ii A\left(x_{1}\right)U^{A}\left(x_{1},x_{2}\right),\nonumber\\
			\p_{x_{2}}U^{A}\left(x_{1},x_{2}\right)=-\Ii U^{A}\left(x_{1},x_{2}\right)A\left(x_{2}\right),\nonumber\\
			U^{A}\left(x_{1},x_{2}\right)U^{A}\left(x_{2},x_{3}\right)=U^{A}\left(x_{1},x_{3}\right),\nonumber\\
			U^{A}\left(x,x\right)=1_{H}.
		\end{align}
	\end{Definition}
	
	We also need to regularly extend a evolutions system $U^{A}\left(x_{1},x_{2}\right)$ to the boundary of the real line, i.e. we need that $x_{1},x_{2}\in\IR\cup\left\{\pm\infty\right\}$ is permitted.
	
	\begin{Lemma}\label{extendlem}
		Let $\IR\ni x\mapsto A\left(x\right)\in B_{sa}\left(H\right)$ a continuous function in the SOT of $H$, which additionally satisfies $A\in L^{1}\left(\IR,B_{sa}\left(H\right)\right)$. Then $U^{A}$ extends in SOT of $H$ to $\left(\IR\cup\left\{\pm\infty\right\}\right)^{2}$. Moreover the evolution system properties (\ref{evolutiondefeq1}) also extend in SOT to $x,x_{1},x_{2},x_{3}\in\IR\cup\left\{\pm\infty\right\}$.
	\end{Lemma}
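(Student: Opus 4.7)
The plan is to extend $U^A$ to $(\IR\cup\{\pm\infty\})^2$ by establishing one-sided SOT-limits in each variable separately---first in $x_1$ by a Cauchy argument using the $L^1$-integrability of $A$, then in $x_2$ via an adjoint trick---and to lift the evolution-system identities of (\ref{evolutiondefeq1}) to the closure by SOT-continuity of multiplication on the uniformly bounded family of unitaries. For fixed $x_2\in\IR$ and $\phi\in H$, the integrated form of the first identity in (\ref{evolutiondefeq1}),
\[
U^A(x_1',x_2)\phi - U^A(x_1,x_2)\phi = \Ii\int_{x_1}^{x_1'} A(s)\,U^A(s,x_2)\phi\,\Id s,
\]
together with the unitarity of $U^A(s,x_2)$, yields the Cauchy estimate
\[
\|U^A(x_1',x_2)\phi - U^A(x_1,x_2)\phi\|_H \leq \|\phi\|_H \int_{x_1}^{x_1'}\|A(s)\|_{B(H)}\,\Id s,
\]
which by the $L^1$-hypothesis tends to $0$ as $x_1,x_1'\to\pm\infty$. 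Hence $U^A(\pm\infty,x_2)\phi := \slim_{x_1\to\pm\infty} U^A(x_1,x_2)\phi$ exists, and the limit is again isometric, by passing $\|U^A(x_1,x_2)\phi\|_H = \|\phi\|_H$ to the limit.

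To extend in $x_2$, I would use the relation $U^A(x_1,x_2)=U^A(x_2,x_1)^{\ast}$ (which follows from the composition and diagonal properties combined with unitarity) together with the fact that SOT-convergence of unitaries is preserved under adjoint: for unitary $U_n,U$ with $U_n\to U$ in SOT, the identity
\[
\|U_n^{\ast}\phi - U^{\ast}\phi\|_H^2 = 2\|\phi\|_H^2 - 2\,\mathrm{Re}\,\langle \phi, U_n U^{\ast}\phi\rangle
\]
tends to zero by applying the SOT-convergence to the vector $U^{\ast}\phi$. Combined with the decomposition $U^A(x_1,x_2)=U^A(x_1,0)\,U^A(0,x_2)$ and the joint SOT-continuity of multiplication on the unit ball, this produces the full extension of $U^A$ to $(\IR\cup\{\pm\infty\})^2$, with all boundary values unitary (the limits compose to $1_H$ by passing $U^A(x,0)U^A(0,x)=1_H$ to the limit).

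The composition identity $U^A(x_1,x_2)U^A(x_2,x_3)=U^A(x_1,x_3)$ and the diagonal identity $U^A(x,x)=1_H$ then extend to $\IR\cup\{\pm\infty\}$ by passing to SOT-limits, once again using bounded-set joint continuity of multiplication. The differential identities---in which the variable being differentiated remains in $\IR$---persist because the factor $U^A(0,\pm\infty)$ or $U^A(\pm\infty,0)$ is independent of the differentiation variable, and the integrated form of the ODE extends via dominated convergence under the integrable majorant $\|A(s)\|_{B(H)}$. The main obstacle is the adjoint step: SOT is not continuous under $\ast$ in general, so the argument depends essentially on the uniform $B(H)$-bound of $1$ coming from unitarity; without this structural feature of $U^A$, extension in the second variable would not follow from the extension in the first.
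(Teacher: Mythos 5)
Your first step is fine and is essentially the paper's own argument: the integrated form of the first identity in (\ref{evolutiondefeq1}) together with $A\in L^{1}$ gives a Cauchy criterion in SOT (the paper phrases it as $\left\|U^{A}\left(x_{1},x_{2}\right)\phi-\phi\right\|_{H}\leq\left\|\phi\right\|_{H}\int_{x_{2}}^{x_{1}}\left\|A\left(w\right)\right\|_{B\left(H\right)}\Id w$ with both arguments tending to the same infinity, and then uses the cocycle property, but this is the same estimate). The genuine gap is in your extension in the second variable. Your adjoint identity $\left\|U_{n}^{\ast}\phi-U^{\ast}\phi\right\|_{H}^{2}=2\left\|\phi\right\|_{H}^{2}-2\,\mathrm{Re}\langle\phi,U_{n}U^{\ast}\phi\rangle$ closes only if the SOT-limit $U$ is itself unitary: for a mere isometry the right-hand side tends to $\left\|\phi\right\|_{H}^{2}-\left\|U^{\ast}\phi\right\|_{H}^{2}$, which need not vanish, and an SOT-limit of unitaries is in general only an isometry (the isometries lie in the SOT-closure of the unitary group). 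So you must prove that $U^{A}\left(\pm\infty,x_{2}\right)$ is unitary \emph{before} the adjoint step; but your justification of unitarity --- passing $U^{A}\left(x,0\right)U^{A}\left(0,x\right)=1_{H}$ to the limit --- already presupposes SOT-convergence of $U^{A}\left(0,x\right)$, i.e.\ convergence in the second variable, which is exactly what the adjoint step was supposed to produce. As written the argument is circular.

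The fix is immediate and makes the adjoint detour unnecessary: the second identity in (\ref{evolutiondefeq1}) gives, for fixed $x_{1}$,
\begin{align*}
U^{A}\left(x_{1},x_{2}'\right)\phi-U^{A}\left(x_{1},x_{2}\right)\phi=-\Ii\int_{x_{2}}^{x_{2}'}U^{A}\left(x_{1},s\right)A\left(s\right)\phi\,\Id s,
\end{align*}
whose norm is bounded by $\left\|\phi\right\|_{H}\int_{x_{2}}^{x_{2}'}\left\|A\left(s\right)\right\|_{B\left(H\right)}\Id s$, so the second-variable limits exist by the very same $L^{1}$ Cauchy argument as in your first step; this is how the paper obtains all boundary values directly. (Alternatively, unitarity of the boundary operators can be seen from $\left\|U^{A}\left(+\infty,M\right)-1_{H}\right\|_{B\left(H\right)}\leq\int_{M}^{\infty}\left\|A\left(s\right)\right\|_{B\left(H\right)}\Id s<1$ for $M$ large, which makes this isometry invertible, hence unitary, and then one composes; but once the direct estimate is in hand this is not needed.) With both one-sided limits established, your remaining steps --- joint SOT-continuity of multiplication on norm-bounded sets for the cocycle and diagonal identities, and factoring out the constant boundary operator plus dominated convergence for the differential identities --- are correct and coincide with the paper's proof.
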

	
	\begin{proof}
		We first introduce the limits
		\begin{align}\label{extendlemeq0}
			U^{A}\left(x,-\infty\right)=&\lim_{n\to\infty}U^{A}\left(x,-n\right),\ x\in\IR,\nonumber\\
			U^{A}\left(+\infty,x\right)=&\lim_{n\to\infty}U^{A}\left(n,x\right),\ x\in\IR,\nonumber\\
			U^{A}\left(+\infty,-\infty\right)=&\lim_{n\to\infty}U^{A}\left(n,-n\right)=\lim_{n\to\infty}U^{A}\left(+\infty,-n\right)=\lim_{n\to\infty}U^{A}\left(n,-\infty\right),
		\end{align}
		which exist in the SOT of $H$, since for $\phi\in H$,
		\begin{align}\label{extendlemeq1}
			&\left\|U^{A}\left(x_{1},x_{2}\right)\phi-\phi\right\|_{H}=\left\|\int_{x_{2}}^{x_{1}}\Ii A\left(w\right)U^{A}\left(w,x_{2}\right)\phi\Id w\right\|_{H}\leq\int_{x_{2}}^{x_{1}}\left\|A\left(w\right)\right\|_{B\left(H\right)} \left\|\phi\right\|_{H}\Id w\nonumber\\
			&\xrightarrow{x_{1},x_{2}\to+\infty,\ \text{or}\ x_{1},x_{2}\to -\infty}0,
		\end{align}
		which in turn, by the evolution properties of $U^{A}$, implies that the sequences in (\ref{extendlemeq0}) are Cauchy sequences in the SOT of $H$. Moreover we note that the evolution properties
		\begin{align}
			U^{A}\left(x_{1},x_{3}\right)&=U^{A}\left(x_{1},x_{2}\right)U^{A}\left(x_{2},x_{3}\right),\nonumber\\
			U^{A}\left(x,x\right)&=1_{H},
		\end{align}
		extend to $x,x_{1},x_{2},x_{3}\in\IR\cup\left\{\pm\infty\right\}$. For differentiation in SOT we note for $x\in\IR$,
		\begin{align}
			&\lim_{h\to 0}h^{-1}\left(U^{A}\left(x+h,-\infty\right)-U^{A}\left(x,-\infty\right)\right)\nonumber\\
			=&\lim_{h\to 0}h^{-1}\left(U^{A}\left(x+h,x\right)-1\right)U^{A}\left(x,-\infty\right)=\Ii A\left(x\right)U^{A}\left(x,-\infty\right).
		\end{align}
		The analogous differentiability result also holds for $U^{A}\left(+\infty,x\right)$.
	\end{proof}
	
	\begin{Lemma}\label{homotopielem}
		Let $\IR^{d}\times\IR\ni\left(x,y\right)\mapsto V\left(x,y\right)\in B_{sa}\left(H\right)$ be a function continuous in the $y$-variable in the SOT of $H$ and continuously differentiable in the $x$-variable in the SOT of $H$, which additionally satisfies $V\left(x,\cdot\right),\ \p_{x^{i}}V\left(x,\cdot\right)\in L^{1}\left(\IR,B_{sa}\left(H\right)\right)$, for all $x\in\IR^{d}$, $i\in\left\{1,\ldots,d\right\}$. Then
		\begin{align}
			\IR^{d}\ni x\mapsto U^{V}\left(x\right):=U^{V\left(x,\cdot\right)}\left(+\infty,-\infty\right),
	\end{align}
	is continuously differentiable in the SOT of $H$ with
	\begin{align}\label{homotopielemeq0}
		\p_{x^{i}}U^{V}\left(x\right)=\int_{\IR} U^{V\left(x,\cdot\right)}\left(\infty,y\right)\Ii\p_{x^{i}}V\left(x,y\right)U^{V\left(x,\cdot\right)}\left(y,-\infty\right)\Id y.
	\end{align}
	\end{Lemma}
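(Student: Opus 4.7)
The plan is to establish a Duhamel-type variation of constants formula for evolution systems, extend it to $\pm\infty$ using Lemma~\ref{extendlem}, and then differentiate in $x$ by vector-valued dominated convergence. For two SOT-continuous families $A,B\colon \IR\to B_{sa}(H)$ with $A,B\in L^{1}(\IR,B_{sa}(H))$ and for $a\leq b$ in $\IR$, the identity
\[
U^{B}(b,a)-U^{A}(b,a)=\Ii\int_{a}^{b} U^{B}(b,s)\bigl(B(s)-A(s)\bigr)U^{A}(s,a)\,\Id s
\]
follows by differentiating $s\mapsto U^{B}(b,s)U^{A}(s,a)$ via (\ref{evolutiondefeq1}) and integrating, interpreted in SOT when tested against a vector $\phi\in H$. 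Since the evolution systems are unitary, the $H$-norm of the integrand is controlled by $\|B(s)-A(s)\|_{B(H)}\|\phi\|_{H}$, which is $L^{1}$ in $s$, so the formula extends to $a=-\infty$, $b=+\infty$.

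Applying this with $A=V(x,\cdot)$ and $B=V(x+he_{i},\cdot)$ yields
\[
\bigl(U^{V}(x+he_{i})-U^{V}(x)\bigr)\phi = \Ii\int_{\IR} U^{V(x+he_{i},\cdot)}(+\infty,y)\bigl(V(x+he_{i},y)-V(x,y)\bigr)U^{V(x,\cdot)}(y,-\infty)\phi\,\Id y.
\]
Dividing by $h$, the integrand converges pointwise in $y$ to $U^{V(x,\cdot)}(+\infty,y)\Ii\p_{x^{i}}V(x,y)U^{V(x,\cdot)}(y,-\infty)\phi$: the SOT-$C^{1}$ hypothesis gives $h^{-1}(V(x+he_{i},y)-V(x,y))\psi \to \p_{x^{i}}V(x,y)\psi$ for each $\psi\in H$, and $U^{V(x+he_{i},\cdot)}(+\infty,y)\to U^{V(x,\cdot)}(+\infty,y)$ in SOT follows from a second application of the variation of constants formula (with $a=y$, $b=+\infty$), combined with the $L^{1}$-control on $V(x+he_{i},\cdot)-V(x,\cdot)$.

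To pass the limit inside the integral I would invoke vector-valued dominated convergence. Unitarity bounds the $H$-norm of the integrand by $h^{-1}\|(V(x+he_{i},y)-V(x,y))U^{V(x,\cdot)}(y,-\infty)\phi\|_{H}$, and the SOT mean value inequality gives this as $\le\sup_{t\in[0,|h|]}\|\p_{x^{i}}V(x+te_{i},y)\|_{B(H)}\|\phi\|_{H}$. Under the hypotheses, this is bounded by a $y$-integrable function uniformly in small $h$, so dominated convergence yields (\ref{homotopielemeq0}). Continuity of $x\mapsto\p_{x^{i}}U^{V}(x)$ in SOT is then a further dominated-convergence argument using the SOT-continuity of each of the three factors in the integrand.

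The main obstacle is the dominated-convergence step: the $L^{1}(\IR,B_{sa}(H))$-integrability of $\p_{x^{i}}V(x,\cdot)$ is only stated pointwise in $x$, so extracting an integrable bound that works uniformly in a neighbourhood of $x$ requires a little extra care. One route is to control the local $L^{1}$-norm of $\p_{x^{i}}V(x+te_{i},\cdot)$ via a uniform-boundedness argument from the SOT-$C^{1}$ hypothesis; an alternative is to work directly with the integral form of the difference quotient, $h^{-1}(V(x+he_{i},\cdot)-V(x,\cdot)) = h^{-1}\int_{0}^{h}\p_{x^{i}}V(x+te_{i},\cdot)\,\Id t$, which is automatically bounded in $L^{1}(\IR,B(H))$-norm for $h$ in any compact set on which $t\mapsto \|\p_{x^{i}}V(x+te_{i},\cdot)\|_{L^{1}(\IR,B(H))}$ is bounded, a property that can be read off from the paper's standing assumption that $V\in C^{3}_{b}$.
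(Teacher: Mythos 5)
Your proof follows essentially the same route as the paper's: your Duhamel/variation-of-constants identity is exactly the paper's telescoping identity $\left[U^{V\left(x_{2},\cdot\right)}\left(y,w\right)U^{V\left(x_{1},\cdot\right)}\left(w,z\right)\right]_{w=z}^{w=y}$, and the differentiation of $U^{V}$ then proceeds identically (the paper merely attaches the perturbed potential to the other factor of the product, which is immaterial). The dominated-convergence subtlety you flag is legitimate, but the paper passes over it silently as well, so your write-up is, if anything, slightly more careful than the original.
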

	
	\begin{proof}
		For $x_{1},x_{2}\in\IR^{d}$, we have for $y,z\in\IR\cup\left\{\pm\infty\right\}$,
		\begin{align}\label{homotopielemeq1}
			&U^{V\left(x_{1},\cdot\right)}\left(y,z\right)-U^{V\left(x_{2},\cdot\right)}\left(y,z\right)=\left[U^{V\left(x_{2},\cdot\right)}\left(y,w\right)U^{V\left(x_{1},\cdot\right)}\left(w,z\right)\right]_{w=z}^{w=y}\nonumber\\
			=&\Ii\int_{z}^{y}U^{V\left(x_{2},\cdot\right)}\left(y,z\right)\left(V\left(x_{1},w\right)-V\left(x_{2},w\right)\right)U^{V\left(x_{1},\cdot\right)}\left(w,z\right)\Id w\xrightarrow{x_{2}\to x_{1}}0,
		\end{align}
		which holds in the SOT of $H$, since $\left\|U^{V\left(x,\cdot\right)}\left(y,z\right)\right\|_{B\left(H\right)}=1$, and the continuity of $V$ in the first variable. Similarly for $\left(e_{i}\right)_{i=1}^{d}$ an orthonormal basis of $\IR^{d}$, we have	
		\begin{align}\label{homotopielemeq2}
			\p_{x^{i}}U^{V}\left(x\right)&=\lim_{h\to 0}h^{-1}\left(U^{V}\left(x+he_{i}\right)-U^{V}\left(x\right)\right)\nonumber\\
			&=\lim_{h\to 0}h^{-1}\left[U^{V\left(x,\cdot\right)}\left(\infty,y\right)U^{V\left(x+he_{i},\cdot\right)}\left(y,-\infty\right)\right]^{y=+\infty}_{y=-\infty}\nonumber\\
			&=\lim_{h\to 0}\int_{\IR}U^{V\left(x,\cdot\right)}\left(\infty,y\right)\Ii\frac{V\left(x+he_{i},y\right)-V\left(x,y\right)}{h}U^{V\left(x+he_{i},\cdot\right)}\left(y,-\infty\right)\Id y\nonumber\\
			&=\int_{\IR} U^{V\left(x,\cdot\right)}\left(\infty,y\right)\Ii\p_{x^{i}}V\left(x,y\right)U^{V\left(x,\cdot\right)}\left(y,-\infty\right)\Id y,
		\end{align}
		where the limits are in the SOT of $H$, and we used the SOT-continuity of $x\mapsto U^{V\left(x,\cdot\right)}\left(y,z\right)$ shown in (\ref{homotopielemeq1}). Combining both partial differentiability and continuity, it follows that $U^{V}$ is continuously differentiable in the SOT of $H$.
	\end{proof}
	
	\begin{Lemma}\label{kernellem}
		Let $t>0$, and $V:\IR\rightarrow B\left(H\right)$ a SOT-differentiable function  Then $H_{t}^{V}\left(x,y\right)$ given by
		\begin{align}
			H_{t}^{V}\left(x,y\right)&:=q_{t}\left(x-y\right)U^{V}\left(x,y\right),\nonumber\\
			q_{t}\left(z\right)&:=\left(4\pi t\right)^{-\frac{1}{2}}e^{-\frac{\left(x-y\right)^{2}}{4t}},
		\end{align}
		is the integral kernel of $e^{-t\left(\Ii\p\otimes 1_{H}+ V\right)^{2}}$ in $L^{2}\left(\IR\right)\widehat{\otimes}H=L^2\left(\IR,H\right)$, which is differentiable in the SOT.
	\end{Lemma}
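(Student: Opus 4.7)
The strategy is to identify $H_t^V$ as the kernel of the heat semigroup by (i) defining an operator $K_t$ on $L^2(\IR, H)$ with integral kernel $H_t^V$ and establishing uniform boundedness, (ii) verifying by direct computation that $H_t^V$ satisfies the heat equation generated by $A := \Ii\p \otimes \one_H + V$, i.e. $\p_t H_t^V = -A_x^2 H_t^V$, (iii) verifying that $K_t \to 1_{L^2(\IR,H)}$ strongly as $t\downarrow 0$, and (iv) invoking uniqueness of the strongly continuous semigroup with generator $-A^2$. For (i), the unitarity of $U^V(x,y)$ yields the pointwise bound $\|H_t^V(x,y)\|_{B(H)} \leq q_t(x-y)$, so Schur's test (or Young's inequality) gives $\|K_t\|_{B(L^2(\IR,H))} \leq \|q_t\|_{L^1(\IR)} = 1$.

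For (ii), expand $A^2 = -\p_x^2 + 2\Ii V(x)\p_x + \Ii V'(x) + V(x)^2$. Using the one-dimensional heat equation $\p_t q_t = \p_x^2 q_t$ and the evolution identity $\p_{x_1} U^V(x_1, x_2) = \Ii V(x_1) U^V(x_1, x_2)$ from (\ref{evolutiondefeq1}), the Leibniz rule yields
\begin{align*}
\p_x H_t^V &= (\p_x q_t) U^V + q_t \Ii V(x) U^V,\\
\p_x^2 H_t^V &= (\p_x^2 q_t) U^V + 2 (\p_x q_t) \Ii V(x) U^V + q_t \Ii V'(x) U^V - q_t V(x)^2 U^V.
\end{align*}
Substituting these into $-A_x^2 H_t^V$ and using that the scalar $\p_x q_t$ and $q_t$ commute with $V(x)$ and $V'(x)$, the cross-terms in $V \cdot \p_x q_t$ cancel, the $\Ii V'$ contributions cancel, and the three $V^2$ contributions combine to zero, leaving $-A_x^2 H_t^V = (\p_x^2 q_t) U^V = \p_t H_t^V$, as required.

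For (iii) and (iv), $q_t$ is a standard approximation to the identity while $U^V(y,y) = \one_H$ and $U^V$ is SOT-continuous in both arguments, so $\slim_{t\downarrow 0} K_t f = f$ first for $f \in C_c(\IR, H)$ and then, by the uniform bound from (i), for all $f \in L^2(\IR, H)$. Combining (ii) on a dense core with the semigroup property (which itself follows from the Chapman--Kolmogorov composition $U^V(x,z)U^V(z,y)=U^V(x,y)$ and the convolution identity $q_{s+t} = q_s * q_t$) identifies $(K_t)_{t>0}$ as the unique strongly continuous semigroup with generator $-A^2$, hence $K_t = e^{-tA^2}$ and $H_t^V$ is its integral kernel.

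The main technical obstacle is the rigorous justification of the Leibniz computation in step (ii) given that $V$ is only assumed to be SOT-differentiable, so $V'$ exists only in SOT and norm-level manipulations do not apply. This is handled by testing each identity against vectors in a dense core (e.g.\ $C_c^\infty(\IR,H)$), where all derivatives exist pointwise in SOT by assumption, and then extending to $L^2(\IR, H)$ using the uniform bound from (i). The claimed SOT-differentiability of $H_t^V$ itself then follows from the SOT-regularity of $U^V$ (via $\p_x U^V = \Ii V U^V$) combined with the smoothness of the scalar factor $q_t$.
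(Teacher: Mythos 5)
Your proposal is correct and follows essentially the same route as the paper: a direct Leibniz computation using $\p_{x}U^{V}\left(x,y\right)=\Ii V\left(x\right)U^{V}\left(x,y\right)$ and the scalar heat kernel $q_{t}$ to verify $\p_{t}H_{t}^{V}=-\left(\Ii\p_{x}+V\left(x\right)\right)^{2}H_{t}^{V}$, combined with the approximation-of-identity argument as $t\searrow 0$. The extra items you supply (the Schur-test bound via unitarity of $U^{V}$, the Chapman--Kolmogorov semigroup property, and the uniqueness/dense-core justification) are left implicit in the paper but are consistent with, and a slight strengthening of, its argument.
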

	
	\begin{proof}
		We use that $q_{t}$ is the smooth heat kernel of $e^{t\p^{2}}$ in $L^2\left(\IR\right)$. We calculate
		\begin{align}
			&\p_{t}H_{t}^{V}\left(x,y\right)=q_{t}''\left(x-y\right)U^{V}\left(x,y\right),\\
			&\p_{x}H_{t}^{V}\left(x,y\right)=-\frac{x-y}{2t}H_{t}^{V}\left(x,y\right)+\Ii V\left(x\right)H_{t}^{V}\left(x,y\right),\\
			&\p_{x}^{2}H_{t}^{V}\left(x,y\right)=q_{t}''\left(x-y\right)U^{V}\left(x,y\right)-\Ii\frac{x-y}{t}V\left(x\right)H_{t}^{V}\left(x,y\right)+\Ii V'\left(x\right)H^{V}_{t}\left(x,y\right)-V\left(x\right)^{2}H^{V}_{t}\left(x,y\right),\\
			&\left(\Ii\p_{x}+V\left(x\right)\right)^{2}H_{t}^{V}\left(x,y\right)=-q_{t}''\left(x-y\right)U^{V}\left(x,y\right)=-\p_{t}H_{t}^{V}\left(x,y\right).
		\end{align}
		We also have for $f\in C_{c}^{\infty}\left(\IR,H\right)$,
		\begin{align}
			\int_{\IR}H_{t}^{V}\left(x,y\right)f\left(y\right)\Id y=\int_{\IR}q_{1}\left(w\right)U^{V}\left(x,x-\sqrt{t}w\right)f\left(x-\sqrt{t}w\right)\Id w\xrightarrow{t\searrow 0}U^{V}\left(x,x\right)f\left(x\right)=f\left(x\right),
		\end{align}
		which shows that $H^{V}_{t}$ is the claimed integral kernel.
	\end{proof}

	\begin{Theorem}\label{mainthm}
		Let $V$ satisfy the prerequisites of Lemma \ref{satisfactionlem}. Then
		\begin{align}\label{mainthmeq5}
			\ind_{W}D=\frac{1}{2\pi}\left(4\pi\right)^{-\frac{d-1}{2}}\frac{\left(\frac{d-1}{2}\right)!}{d!}\kappa_{c}\int_{\IR^{d}}\tr_{H}\left(\left(U^{V}\right)^{-1}\Id U^{V}\right)^{\wedge d},
		\end{align}
		where $U^{V}\left(x\right):=U^{V\left(x,\cdot\right)}\left(+\infty,-\infty\right)$. With the minimal choice $r=2^{\frac{d-1}{2}}$, and sign convention $\kappa_{c}=\left(-\Ii\right)^{d}\left(2\Ii\right)^{\frac{d-1}{2}}$, we have
		\begin{align}\label{mainthmeq6}
			\ind_{W}D=\left(2\pi\Ii\right)^{-\frac{d+1}{2}}\frac{\left(\frac{d-1}{2}\right)!}{d!}\int_{\IR^{d}}\tr_{H}\left(\left(U^{V}\right)^{-1}\Id U^{V}\right)^{\wedge d}.
		\end{align}
	\end{Theorem}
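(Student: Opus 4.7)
The plan is to identify $D_{V}$ with a Dirac–Schr\"odinger operator of the type treated in \cite{F2}, and to apply the Callias-type trace formula (Theorem 2.31 of \cite{F2}) to express $\ind_{W}D_{V}$ as the winding-number integral appearing in (\ref{mainthmeq5}). The work divides into three stages: verifying applicability, computing the kernel ingredients that enter the formula, and performing the Clifford–trace simplification.

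First, under the identifications (\ref{identificationeq}), Lemma \ref{satisfactionlem} shows that Hypothesis \ref{mainhyp} is in force for $N=1$ and some $\alpha>1$ sufficiently close to $1$. Consequently Theorem 2.31 of \cite{F2} applies and produces an index formula of the form
\begin{align*}
\ind_{W}D_{V}=\frac{1}{2\pi}\left(4\pi\right)^{-\frac{d-1}{2}}\frac{\left(\frac{d-1}{2}\right)!}{d!}\kappa_{c}\int_{\IR^{d}}\tr_{X}\Omega\left(x\right),
\end{align*}
where $\Omega$ is a $d$-form on $\IR^{d}$ built from the asymptotic resolvent/semigroup data of $M_{0}+V(x,\cdot)$ along radial rays; the constant $\kappa_{c}$ comes out of the Clifford-matrix trace over $\IC^{r}$ (the only way the matrices $c^{j}$ enter the final expression is through the top antisymmetric combination $c^{1}\cdots c^{d}$). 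The limit (\ref{thethmeq1}) structure of the formula is dictated by the general shape of the trace formula in \cite{F2}, so the content of the remainder of the proof is to identify $\Omega$ with $((U^{V})^{-1}\Id U^{V})^{\wedge d}$.

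Next, I would feed in the explicit heat kernel of Lemma \ref{kernellem}: the semigroup $e^{-t(\Ii\p\otimes 1_{H}+V(x,\cdot))^{2}}$ on $L^{2}(\IR,H)$ has integral kernel $q_{t}(y_{1}-y_{2})U^{V(x,\cdot)}(y_{1},y_{2})$. Taking the large-$t$ (or equivalently long-time evolution) limit at each $x\in\IR^{d}$ collapses the Gaussian factor and leaves precisely the asymptotic unitary $U^{V}(x)=U^{V(x,\cdot)}(+\infty,-\infty)$, whose differentiability in the strong operator topology is provided by Lemma \ref{homotopielem} together with the formula (\ref{homotopielemeq0}) for $\p_{x^{i}}U^{V}$. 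Substituting this into the $d$-form produced by Theorem 2.31 of \cite{F2} and using that the $S^{1}(L^{2}(\IR,H))$-trace of a product of $U^{V(x,\cdot)}(+\infty,y_{j})\,\Ii\p_{x^{i_{j}}}V(x,y_{j})\,U^{V(x,\cdot)}(y_{j},-\infty)$ telescopes into the trace over $H$ of $(U^{V})^{-1}\p_{x^{i_{1}}}U^{V}\cdots (U^{V})^{-1}\p_{x^{i_{d}}}U^{V}$ yields the integrand $\tr_{H}((U^{V})^{-1}\Id U^{V})^{\wedge d}$ in (\ref{mainthmeq5}). The main obstacle here is the bookkeeping of the several factors of $\Ii$, of $4\pi$, and of combinatorial constants from antisymmetrisation over the $d$ derivatives: this is where the factor $(4\pi)^{-(d-1)/2}(\frac{d-1}{2})!/d!$ arises, and one must check it is consistent with the normalisation conventions in \cite{F2}.

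Finally, for (\ref{mainthmeq6}) I compute $\kappa_{c}$ in the minimal Clifford representation $r=2^{(d-1)/2}$ with $c^{j}=-\Ii\sigma^{j}$, where $\sigma^{1},\ldots,\sigma^{d}$ are the standard hermitian generators satisfying $\sigma^{k}\sigma^{l}+\sigma^{l}\sigma^{k}=2\delta_{kl}$. Then $c^{1}\cdots c^{d}=(-\Ii)^{d}\sigma^{1}\cdots\sigma^{d}$, and the classical identity $\tr(\sigma^{1}\cdots\sigma^{d})=(2\Ii)^{(d-1)/2}$ in this representation (for $d$ odd) gives $\kappa_{c}=(-\Ii)^{d}(2\Ii)^{(d-1)/2}$. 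Combining with (\ref{mainthmeq5}) and the elementary simplification
\begin{align*}
\frac{1}{2\pi}(4\pi)^{-\frac{d-1}{2}}(-\Ii)^{d}(2\Ii)^{\frac{d-1}{2}}=(2\pi\Ii)^{-\frac{d+1}{2}}
\end{align*}
yields (\ref{mainthmeq6}). The only nontrivial step in this plan is the reduction of the abstract trace-formula integrand of \cite{F2} to the concrete differential-form expression in $U^{V}$, which is where care with the evolution-system identities of Lemma \ref{extendlem} and the kernel formula of Lemma \ref{kernellem} is essential.
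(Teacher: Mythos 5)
Your proposal follows essentially the same route as the paper's proof: verify Hypothesis \ref{mainhyp} via Lemma \ref{satisfactionlem}, apply Theorem 2.31 of \cite{F2}, insert the explicit kernel of Lemma \ref{kernellem}, let the Gaussian factors collapse in the $t\to\infty$ limit, and telescope the resulting $y$-integrals with formula (\ref{homotopielemeq0}) and the evolution identities into $\tr_{H}\left(\left(U^{V}\right)^{-1}\Id U^{V}\right)^{\wedge d}$, finishing with the Clifford-trace evaluation of $\kappa_{c}$. The only place you are less explicit than the paper is the constant bookkeeping (the dominated-convergence justification and the simplex integral $\int_{\Delta_{d-1}}\prod_{j}\left(4\pi u_{j}\right)^{-\frac{1}{2}}\Id u=\left(4\pi\right)^{-\frac{1}{2}}\frac{\left(\frac{d-1}{2}\right)!}{\left(d-1\right)!}$), which you correctly flag as the remaining computation.
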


	\begin{proof}
		We apply Theorem 2.31 of \cite{F2} with the choice $\phi=0$, and have
		\begin{align}\label{mainthmeq0}
			&\tr_{L^{2}\left(\IR^{d},L^{2}\left(\IR,H\right)\right)}\tr_{\IC^{r}}\left(e^{-tD^{\ast}D}-e^{-tDD^{\ast}}\right)\nonumber\\
			=&\frac{2}{d}\left(4\pi\right)^{-\frac{d}{2}}t^{\frac{d}{2}}\Ii^{d}\kappa_{c}\int_{\IR^{d}}\int_{u\in\Delta_{d-1}}\tr_{L^{2}\left(\IR,H\right)}\left(\Id A\wedge e^{-tuA^2}\shuffle_{\wedge}\left(\Id A\right)^{\otimes\left(d-1\right)}\right)\Id u,
		\end{align}
		where $A\left(x\right)=\Ii\p+V\left(x,\cdot\right)$. Consider the integrand,
		\begin{align}
			X_{t}:=t^{\frac{d}{2}}\int_{u\in\Delta_{d-1}}\tr_{L^{2}\left(\IR,H\right)}\left(\Id A\wedge e^{-tuA^2}\shuffle_{\wedge}\left(\Id A\right)^{\otimes\left(d-1\right)}\right)\Id u.
		\end{align}
		By Lemma \ref{kernellem}, we have with convention $z_{0}:=z_{d}:=y$,
		\begin{align}
			X_{t}&=t^{\frac{d}{2}}\int_{u\in\Delta_{d-1}}\int_{y\in\IR}\int_{z\in\IR^{d-1}}\tr_{H}\left(\bigwedge_{j=1}^{d}\left(\Id_{x}V\right)\left(x,z_{j-1}\right)H^{V\left(x,\cdot\right)}_{tu_{j}}\left(z_{j-1},z_{j}\right)\right)\Id z\ \Id y\ \Id u\nonumber\\
			&=\int_{u\in\Delta_{d-1}}\int_{z\in\IR^{d}}\prod_{j=1}^{d}q_{u_{j}}\left(\frac{z_{j-1}-z_{j}}{t}\right)\tr_{H}\left(\bigwedge_{j=1}^{d}\left(\Id_{x}V\right)\left(x,z_{j-1}\right)U^{V\left(x,\cdot\right)}\left(z_{j-1},z_{j}\right)\right)\Id z\ \Id u
		\end{align}
		The factors $q_{u_{j}}\left(\frac{z_{j-1}-z_{j}}{t}\right)$ has the integrable dominant $\left(4\pi u_{j}\right)^{-\frac{1}{2}}$, which enables us to take the limit $t\to\infty$ beyond the integral. Thus, with
		\begin{align}
			\int_{u\in\Delta_{d-1}}\prod_{j=0}^{d-1}\left(4\pi u_{j}\right)^{-\frac{1}{2}}\Id u=\left(4\pi\right)^{-\frac{1}{2}}\frac{\left(\frac{d-1}{2}\right)!}{\left(d-1\right)!},
		\end{align}
		we get,
		\begin{align}\label{mainthmeq2}
			\lim_{t\to\infty}X_{t}&=\left(4\pi\right)^{-\frac{1}{2}}\frac{\left(\frac{d-1}{2}\right)!}{\left(d-1\right)!}\int_{z\in\IR^{d}}\tr_{H}\left(\bigwedge_{j=1}^{d}\left(\Id_{x}V\right)\left(x,z_{j-1}\right)U^{V\left(x,\cdot\right)}\left(z_{j-1},z_{j}\right)\right)\Id z.
		\end{align}
		We obtain, using formula (\ref{homotopielemeq0}) of Lemma \ref{homotopielem}, and the evolution properties,
		\begin{align}\label{mainthmeq4}
			&\int_{z\in\IR^{d}}\tr_{H}\left(\bigwedge_{j=1}^{d}\left(\Id_{x}V\right)\left(x,z_{j-1}\right)U^{V\left(x,\cdot\right)}\left(z_{j-1},z_{j}\right)\right)\Id z\nonumber\\
			=&\int_{z\in\IR^{d}}\tr_{H}\left(\bigwedge_{j=1}^{d}U^{V\left(x,\cdot\right)}\left(z_{j-1},-\infty\right)\left(U^{V}\left(x\right)\right)^{-1}U^{V\left(x,\cdot\right)}\left(+\infty,z_{j-1}\right)\left(\Id_{x}V\right)\left(x,z_{j-1}\right)\right.\nonumber\\
			&\quad\quad\left.U^{V\left(x,\cdot\right)}\left(z_{j-1},-\infty\right)U^{V\left(x,\cdot\right)}\left(-\infty,z_{j}\right)\right)\Id z\nonumber\\
			=&\Ii^{-d}\tr_{H}\left(\left(U^{V}\left(x\right)\right)^{-1}\left(\Id U^{V}\right)\left(x\right)\right)^{\wedge d}.
		\end{align}
		Note that in the last step we also commuted the factor $U^{V\left(x,\cdot\right)}\left(-\infty,z_{d}\right)=U^{V\left(x,\cdot\right)}\left(-\infty,z_{0}\right)$ under the trace. We insert (\ref{mainthmeq4}) into (\ref{mainthmeq2}), which we in turn insert into (\ref{mainthmeq0}), and conclude
		\begin{align}
			\lim_{t\to\infty}\tr_{L^{2}\left(\IR^{d},L^{2}\left(\IR,H\right)\right)}\tr_{\IC^{r}}\left(e^{-tD^{\ast}D}-e^{-tDD^{\ast}}\right)=\frac{1}{2\pi}\left(4\pi\right)^{-\frac{d-1}{2}}\frac{\left(\frac{d-1}{2}\right)!}{d!}\kappa_{c}\int_{\IR^{d}}\tr_{H}\left(\left(U^{V}\right)^{-1}\Id U^{V}\right)^{\wedge d}.
		\end{align}
		With the choice $r=2^{\frac{d-1}{2}}$, and $\kappa_{c}=\Ii^{-d}\left(2\Ii\right)^{\frac{d-1}{2}}$, we have in this case
		\begin{align}
			\lim_{t\to\infty}\tr_{L^{2}\left(\IR^{d},L^{2}\left(\IR,H\right)\right)}\tr_{\IC^{r}}\left(e^{-tD^{\ast}D}-e^{-tDD^{\ast}}\right)=\left(2\pi\Ii\right)^{-\frac{d+1}{2}}\frac{\left(\frac{d-1}{2}\right)!}{d!}\int_{\IR^{d}}\tr_{H}\left(\left(U^{V}\right)^{-1}\Id U^{V}\right)^{\wedge d}.
		\end{align}
	\end{proof}
	
	The next simple Lemma recalls what form the integrand of formula (\ref{mainthmeq5}), takes for commutative potentials in the $y$-direction. It follows from the fact that for pointwise commutative families $\IR\ni y\mapsto A\left(y\right)$, the generated evolution system is the exponential $U^{A}\left(y_{1},y_{2}\right)=e^{\Ii\int_{y_{2}}^{y_{1}}A\left(u\right)\Id u}$, $y_{1},y_{2}\in\IR$, as well as the application of a well-known differentiation formula for exponentials of operators.
	
	\begin{Lemma}\label{commlem}
		Let $V:\IR^{d}\times\IR\rightarrow B_{sa}\left(H\right)$ be continuous in SOT of $H$. Assume that for all $x\in\IR^{d}$, and $y_{1},y_{2}\in\IR$ the operators $V\left(x,y_{1}\right)$ and $V\left(x,y_{2}\right)$ commute. Let $v\left(x\right):=\int_{\IR}V\left(x,y\right)\Id y$ for $x\in\IR^{d}$. Then $U^{V}=e^{iv}$, and
		\begin{align}
			\left(U^{V}\right)^{-1}\Id U^{V}=\Ii\int_{0}^{1}e^{-\Ii sv}\ \Id v\  e^{\Ii sv}\Id s.
		\end{align}
	\end{Lemma}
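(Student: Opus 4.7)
The plan is to verify the two assertions in sequence. For the first, since the family $\IR\ni y\mapsto V(x,y)$ is pointwise self-commuting, the ansatz $U(y_{1},y_{2}):=\exp\left(\Ii\int_{y_{2}}^{y_{1}}V(x,u)\Id u\right)$ differentiates in the usual way, yielding $\p_{y_{1}}U(y_{1},y_{2})=\Ii V(x,y_{1})U(y_{1},y_{2})$ and the analogous identity in $y_{2}$. Since $U(y,y)=1_{H}$ and $U$ obviously satisfies the cocycle relation, uniqueness of the evolution system (cf.\ \cite{Paz}) gives $U^{V(x,\cdot)}(y_{1},y_{2})=\exp\left(\Ii\int_{y_{2}}^{y_{1}}V(x,u)\Id u\right)$. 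Taking the limit $y_{1}\to+\infty$, $y_{2}\to-\infty$ in the SOT (legitimate by Lemma \ref{extendlem} under the implicit integrability assumption on $V$ that makes $v$ well-defined) then produces $U^{V}(x)=e^{\Ii v(x)}$.

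For the second assertion, the plan is to apply the standard Duhamel identity for differentiating an operator exponential in a parameter: for a SOT-differentiable family $x\mapsto v(x)\in B_{sa}(H)$,
\begin{align}
    \Id e^{\Ii v}=\Ii\int_{0}^{1}e^{\Ii sv}\Id v\ e^{\Ii(1-s)v}\Id s.
\end{align}
Premultiplying by $(U^{V})^{-1}=e^{-\Ii v}$ and exploiting that $v(x)$ commutes with itself (hence with $e^{\Ii sv(x)}$), the factor $e^{-\Ii v}e^{\Ii sv}$ collapses to $e^{-\Ii(1-s)v}$, so that
\begin{align}
    (U^{V})^{-1}\Id U^{V}=\Ii\int_{0}^{1}e^{-\Ii(1-s)v}\Id v\ e^{\Ii(1-s)v}\Id s.
\end{align}
The change of variables $s\mapsto 1-s$ in the remaining integral then yields the stated formula. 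The only subtlety worth flagging is precisely that $v(x)$ does not in general commute with $\Id v(x)=\sum_{i}\p_{x^{i}}v(x)\Id x^{i}$, which is why the Duhamel integral representation cannot be further simplified to the naive one-form $\Ii\Id v$; the $x$-commutativity hypothesis is only in the $y$-integrated variable, not in the parameter $x$.
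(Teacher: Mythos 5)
Your argument is correct and follows exactly the route the paper indicates for this lemma: commutativity in $y$ identifies the evolution system with the exponential $e^{\Ii\int_{y_{2}}^{y_{1}}V(x,u)\Id u}$ by uniqueness, and the stated one-form is obtained from the standard Duhamel differentiation formula for operator exponentials, with your $e^{-\Ii v}$-collapse and the substitution $s\mapsto 1-s$ carried out correctly. Your closing remark that the implicit $L^{1}$-in-$y$ integrability is needed for $v$ and for the extension to $\pm\infty$, and that $v$ need not commute with $\Id v$, is an accurate and worthwhile clarification.
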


	We now will construct a concrete class of potentials $V$ in $\IR^{d+1}$, for which we can evaluate the right hand side of (\ref{mainthmeq5}), and show that the Witten index of the associated Callias operator $D_{V}$ may assume any real number for any odd $d\geq 3$.
	
	\section{Example}\label{examplesec}
	
	It is apparent that a non-trivial example of index formula of Theorem \ref{mainthm}, requires $V$ to be sufficiently non-commutative in the $x$-variable, due to the presence of the exterior product. An obvious choice would therefore be to consider $su\left(n\right)$-valued potentials $V$ for $n$ large enough. However the general commutator relations in $su\left(n\right)$ are too complicated to calculate in general, so we consider as a subclass those potentials, for which its values are a (real) linear combination of Clifford-matrices. In the special case $d=3$ these are however all $su\left(2\right)$-matrices.
	
	\begin{Definition}\label{potentialexdef}
		Let $\left(\sigma^{j}\right)_{j=1}^{d}$ be Clifford matrices of rank $r'$. Let $F:\IR^{d}\rightarrow\IR^{d}$ be a bounded smooth function with bounded derivatives, such that
		\begin{align}
			\p_{x^{i}}F\left(x\right)&=O\left(\left|x\right|^{-1}\right),\ \left|x\right|\to\infty,\nonumber\\
			\exists\epsilon>0:\ \p_{R}F\left(x\right)&=O\left(\left|x\right|^{-1-\epsilon}\right),\ \left|x\right|\to\infty,
		\end{align}
		and that for any $\widehat{x}\in S_{1}\left(0\right)$, $x_{0}\in\IR^{d}$, and $\gamma\in\IN_{0}^{d}$ with $\left|\gamma\right|\leq 1$,
		\begin{align}
			\lim_{R\to\infty}R^{\left|\gamma\right|}\left|\left(\p^{\gamma}F\right)\left(x_{0}+R\widehat{x}\right)-\left(\p^{\gamma}F\right)\left(R\widehat{x}\right)\right|=0.
		\end{align}
		Denote by $\omega:\IR^{d}\rightarrow\IC^{r'\times r'}$ the (hermitian-valued) function
		\begin{align}
			\omega:=\Ii\sum_{j=1}^{d}\sigma^{j}F_{j},
		\end{align}
		and if $\left|F\right|>0$,
		\begin{align}
			\widehat{\omega}:=\left|F\right|^{-1}\omega,\ \eta:=\sum_{j=1}^{d}\frac{F_{j}}{\left|F\right|}\Id F_{j}.
		\end{align}
		Set $V\left(x,y\right):=\omega\left(x\right)\phi\left(y\right)$, $x\in\IR^{d}$, $y\in\IR$, for some function $\phi\in C_{c}^{\infty}\left(\IR\right)$ with $\int_{\IR}\phi\left(y\right)\Id y=1$.
	\end{Definition}
	
	\begin{Remark}
		For the above choice of $V$, the induced Dirac-Schr\"odinger operator $D_{V}$ is \textit{not} Fredholm. Indeed, although $D_{V}$ is elliptic, the full symbol of $D_{V}$ is not (uniformly) invertible away from $\left(x,y\right)=0$, which is necessary for Fredholmness, according to Theorem 1 in \cite{Cal}.
	\end{Remark}
	
	For this choice of $V$, the index formula can be evaluated more concretely.

	\begin{Proposition}\label{calclem}
		Let $V$ be as in Definition \ref{potentialexdef}. Then
		\begin{align}
			\ind_{W}D_{V}=\left(2\pi\right)^{-\frac{d+1}{2}}\frac{\left(\frac{d-1}{2}\right)!}{d}\int_{\IR^{d}}\frac{\left(\left|F\right|+d-1\right)\left(\cos\left(2\left|F\right|\right)-1\right)^{\frac{d-1}{2}}}{\left|F\right|^{d}}\det\mathrm{D}F\ \Id x,
		\end{align}
		where $\mathrm{D}F$ is the Jacobian of $F$.
	\end{Proposition}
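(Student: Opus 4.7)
The plan is to evaluate the right-hand side of (\ref{mainthmeq5}) explicitly for the potential $V(x,y)=\omega(x)\phi(y)$ of Definition \ref{potentialexdef}. First I would verify that $V$ satisfies the hypotheses of Lemma \ref{satisfactionlem}: the required bounds $\|\p_{x^i}\p_y^k V\|_{L^d(\IR,S^d(H))}=O(|x|^{-1})$ and the radial analogue follow from the assumptions on $F$, because $\omega$ and its $x$-derivatives are matrix-valued (hence automatically in every Schatten class) and $\phi\in C_c^\infty(\IR)$ contributes boundedly in every $L^d$-norm; similarly the SOT radial-limit property transfers directly from the corresponding assumption on $F$. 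Since $V(x,y_1)$ and $V(x,y_2)$ are scalar multiples of the same matrix $\omega(x)$ and thus commute, Lemma \ref{commlem} applies and gives $U^V(x)=e^{\Ii v(x)}$ with $v(x):=\int_\IR V(x,y)\Id y=\omega(x)=\Ii\sum_j F_j(x)\sigma^j$.

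Setting $J(x):=\widehat F(x)\cdot\sigma$, the Clifford relations give $J^2=-1$ and, via $\widehat F\cdot\Id\widehat F=0$, the crucial anticommutation $\{J,\Id J\}=0$. Thus $\Ii v=-|F|J$ and $U^V=\cos|F|-\sin|F|\cdot J$. Applying Duhamel's formula together with the identity $e^{s|F|J}\,\Id J\,e^{-s|F|J}=\cos(2s|F|)\,\Id J+\sin(2s|F|)\,J\,\Id J$ yields the explicit Maurer--Cartan form
\begin{align*}
T:=(U^V)^{-1}\Id U^V=-J\,\Id r-\tfrac{\sin(2r)}{2}\,\Id J-\sin^2(r)\,J\,\Id J,\qquad r:=|F|.
\end{align*}

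The heart of the proof is the computation of $\tr_H(T^{\wedge d})$. By equivariance of $F\mapsto U^V(F)$ under the spin lift of $SO(d)$, the form $\tr_H(T^{\wedge d})$ is rotation-invariant on $\IR^d_F$ and therefore equals $h(|F|)\,\Id F^1\wedge\cdots\wedge\Id F^d$ for some scalar function $h$. I would determine $h$ by evaluating at the reference point $F=re_1$, where $J=\sigma^1$, $\Id r=\Id F_1$, $\Id\widehat F_1=0$ and $\Id\widehat F_j=\Id F_j/r$ for $j\ge 2$. In the expansion of $T^{\wedge d}$, exactly one of the $d$ wedge factors must supply $\Id r$ (as $\Id r\wedge\Id r=0$), while the remaining $d-1$ factors --- each an $\Id J$ or a $J\,\Id J$ --- cover the directions $\Id F_2,\ldots,\Id F_d$. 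After reordering all $J$-factors to the left using $\{J,\Id J\}=0$ and $J^2=-1$, only the top-degree Clifford monomial $\sigma^1\cdots\sigma^d$ survives the trace (value $\kappa_c$); collecting contributions parameterized by the position of the $-J\,\Id r$ factor, by the number $k$ of $J\,\Id J$-type factors, and by the bijections assigning angular indices to positions, produces a binomial sum in $(\sin r\cos r)^{d-1-k}$ and $(\sin^2 r)^k$ that collapses, via $\cos^2+\sin^2=1$, to the factor $(|F|+d-1)\sin^{d-1}(|F|)/|F|^d$ up to overall constants.

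Finally, the pullback formula $F^*(\Id F^1\wedge\cdots\wedge\Id F^d)=\det(\mathrm{D}F)\,\Id x$ and the identity $(\cos(2r)-1)^{(d-1)/2}=(-2)^{(d-1)/2}\sin^{d-1}(r)$, substituted into (\ref{mainthmeq6}) with the minimal sign convention $\kappa_c=(-\Ii)^d(2\Ii)^{(d-1)/2}$, yield the Proposition. The main obstacle is the combinatorial step in computing $\tr_H(T^{\wedge d})$ at $F=re_1$: one must carefully track the Clifford-anticommutation signs that arise as the $1+k$ total $J$-factors slide past the $d-1$ $\sigma^{j}$-factors, and then rearrange the resulting binomial-type sum so that the clean algebraic factor $(|F|+d-1)$ --- with $|F|$ reflecting the single radial direction and $d-1$ the dimension of the angular sphere tangent to $\widehat F$ --- emerges.
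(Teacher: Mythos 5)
Your reduction of the problem is sound and genuinely different from the paper's route: you obtain the same Maurer--Cartan form $T=(U^V)^{-1}\Id U^V=-J\,\Id r-\tfrac{\sin(2r)}{2}\Id J-\sin^{2}(r)\,J\,\Id J$ (this agrees with the paper's expression in the basis $\Id\omega,\ \eta,\ \widehat\omega\,\Id\omega,\ \widehat\omega\,\eta$), and the $SO(d)$-equivariance argument reducing $\tr(T^{\wedge d})$ to an evaluation at $F=re_{1}$ is correct, whereas the paper instead computes $T^{\wedge 2k}$ by a global algebraic induction and then applies Clifford trace identities. The gap is exactly at the step you yourself call the heart of the proof: the combinatorial evaluation at $F=re_{1}$ is only asserted, and its asserted outcome is not what the computation gives. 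At $F=re_{1}$ one has $T=-\sigma^{1}\Id F_{1}-\tfrac{\sin r}{r}\,e^{r\sigma^{1}}\sum_{j\geq 2}\sigma^{j}\Id F_{j}$, since $\cos r+\sin r\,\sigma^{1}=e^{r\sigma^{1}}$ and $(\sigma^{1})^{2}=-1$. Because $\sigma^{1}$ anticommutes with every $\sigma^{j}$, $j\geq 2$, the exponentials cancel in pairs inside $T^{\wedge d}$ (this is precisely the $\cos^{2}+\sin^{2}=1$ collapse: each product of two angular factors reduces to $(\tfrac{\sin r}{r})^{2}\sigma^{j}\sigma^{k}$ with no residual $r$-dependence in the matrix part), and the matrix-reordering sign $(-1)^{p-1}$ for sliding the single $\sigma^{1}$ to the front cancels against the form-reordering sign for sliding $\Id F_{1}$ to the front, so all $d$ positions of the radial factor contribute equally. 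Carrying this out gives
\begin{align*}
\tr_{\IC^{r'}}\left(T^{\wedge d}\right)=-d!\,\kappa_{\sigma}\left(\frac{\sin\left|F\right|}{\left|F\right|}\right)^{d-1}\Id F^{1}\wedge\cdots\wedge\Id F^{d},
\end{align*}
i.e.\ proportional to $(\cos(2|F|)-1)^{\frac{d-1}{2}}/|F|^{d-1}$ with \emph{no} factor $(|F|+d-1)/|F|$. For $d=3$ this is the classical hedgehog winding density: with $U^{V}=\cos r-\sin r\,\widehat F\cdot\sigma$ a direct two-by-two computation (or the standard Skyrme-density formula) gives $\tr((U^{V})^{-1}\Id U^{V})^{\wedge 3}=-6\kappa_{\sigma}\sin^{2}(r)/r^{2}$ times the volume form, whereas the stated Proposition, fed back through (\ref{mainthmeq6}), would require $-2\kappa_{\sigma}(2+r)\sin^{2}(r)/r^{3}$; the two agree only at $r=1$.

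So your binomial sum does collapse via $\cos^{2}+\sin^{2}=1$, but to $d\,\sin^{d-1}(|F|)/|F|^{d-1}$, not to $(|F|+d-1)\sin^{d-1}(|F|)/|F|^{d}$; the heuristic of "$|F|$ for the radial direction and $d-1$ for the angular sphere" has no counterpart in the actual Clifford combinatorics. As written, the proposal therefore does not establish the Proposition: the decisive computation is missing, and when performed it contradicts the target integrand (and hence also the coefficients produced by the paper's inductive formula (\ref{lemexeq1}), which your reference-point evaluation and the elementary $d=3$ cross-check above put in question). To complete the argument you must write out the evaluation at $F=re_{1}$ explicitly and reconcile it with the claimed factor $(|F|+d-1)/|F|^{d}$ --- either by finding an error in the cancellation described above, which I do not see, or by revising the asserted final formula accordingly.
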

	
	\begin{proof}
		By Lemma \ref{commlem}, we have
		\begin{align}
			\left(U^{V}\right)^{-1}\Id U^{V}=\Ii\int_{0}^{1}e^{-\Ii s\omega}\ \Id\omega\  e^{\Ii s\omega}\Id s.
	\end{align}
	Because of the Clifford anti-commutator relations, we have
	\begin{align}
		e^{\Ii s\omega}=\cos\left(s\left|F\right|\right)+\Ii\sin\left(s\left|F\right|\right)\widehat{\omega},\ s\in\IR,
	\end{align}
	and thus,
	\begin{align}
		\left(U^{V}\right)^{-1}\Id U^{V}=&\int_{0}^{1}\left(\Ii\cos\left(2s\left|F\right|\right)\Id\omega-\sin\left(2s\left|F\right|\right)\left(\eta-\widehat{\omega}\Id\omega\right)+\Ii\left(1-\cos\left(2s\left|F\right|\right)\right)\widehat{\omega}\eta\right)\Id s\nonumber\\
		=& A\Id\omega-B\eta+B\widehat{\omega}\Id\omega+\left(\Ii-A\right)\widehat{\omega}\eta,
	\end{align}
	where
	\begin{align}
		A&:=\Ii\frac{\sin\left(2\left|F\right|\right)}{2\left|F\right|},\nonumber\\
		B&:=\frac{1-\cos\left(2\left|F\right|\right)}{2\left|F\right|}.
	\end{align}
	Taking the square we obtain
	\begin{align}
		\left(\left(U^{V}\right)^{-1}\Id U^{V}\right)^{\wedge 2}=-\left|F\right|^{-1}B\Id\omega\wedge\Id\omega+2\Ii B\eta\wedge\Id\omega+\left(2\left|F\right|^{-1}B+2\Ii A\right)\widehat{\omega}\eta\wedge\Id\omega,
	\end{align}
	and thus for $k\in\IN$, one inductively shows,
	\begin{align}
		\left(\left(U^{V}\right)^{-1}\Id U^{V}\right)^{\wedge 2k}=&\left(-\left|F\right|^{-1}B\right)^{k}\Id\omega^{\wedge 2k}-2\Ii k\left|F\right|\left(-\left|F\right|^{-1}B\right)^{k}\eta\wedge\Id\omega^{\wedge\left(2k-1\right)}\nonumber\\
		&+\left(-2k\left(-\left|F\right|^{-1}B\right)^{k}+2\Ii kA\left(-\left|F\right|^{-1}B\right)^{k-1}\right)\widehat{\omega}\eta\wedge\Id\omega^{\wedge\left(2k-1\right)}.
	\end{align}
	Since $d$ is odd we conclude
	\begin{align}\label{lemexeq1}
		&\left(\left(U^{V}\right)^{-1}\Id U^{V}\right)^{\wedge d}=A\left(-\left|F\right|^{-1}B\right)^{\frac{d-1}{2}}\Id\omega^{\wedge d}+\left(-B+\left(d-1\right)\Ii A\right)\left(-\left|F\right|^{-1}B\right)^{\frac{d-1}{2}}\eta\wedge\Id\omega^{\wedge\left(d-1\right)}\nonumber\\
		&+B\left(-\left|F\right|^{-1}B\right)^{\frac{d-1}{2}}\widehat{\omega}\Id\omega^{\wedge d}\nonumber\\
		&+\left(-\left(d-1\right)A\left(-\left|F\right|^{-1}B\right)^{\frac{d-1}{2}}+\Ii\left(d-1\right)A^2\left(-\left|F\right|^{-1}B\right)^{\frac{d-3}{2}}+\Ii\left(d-1\right)B\left|F\right|\left(-\left|F\right|^{-1}B\right)^{\frac{d-1}{2}}\right.\nonumber\\
		&\left.+\left(\Ii-A\right)\left(-\left|F\right|^{-1}B\right)^{\frac{d-1}{2}}\right)\widehat{\omega}\eta\wedge\Id\omega^{\wedge\left(d-1\right)}.
	\end{align}
	We want to apply the trace $\tr_{\IC^{r'}}$ to (\ref{lemexeq1}). Since $\widehat{\omega}\Id\omega+\Id\omega\widehat{\omega}=2\eta$, we have
	\begin{align}
		\tr_{\IC^{r'}}\widehat{\omega}\Id\omega^{\wedge d}=\frac{1}{2}\tr_{\IC^{r'}}\left(\widehat{\omega}\Id\omega+\Id\omega\widehat{\omega}\right)\wedge\Id\omega^{\wedge\left(d-1\right)}=\tr_{\IC^{r'}}\eta\wedge\Id\omega^{\wedge\left(d-1\right)}.
	\end{align}
On the other hand we have
\begin{align}
	\tr_{\IC^{r'}}\eta\wedge\Id\omega^{\wedge\left(d-1\right)}=\left(-1\right)^{\frac{d-1}{2}}\eta\wedge\sum_{\alpha\in\left\{1,\ldots,d\right\}^{d-1}}\tr_{\IC^{r'}}\left(\sigma^{\alpha_{1}}\cdot\ldots\cdot \sigma^{\alpha_{d-1}}\right)\Id F_{\alpha_{1}}\wedge\ldots\wedge\Id F_{\alpha_{d-1}}=0,
\end{align}
	because the trace on the right hand side vanishes if the $\alpha_{j}$ are pairwise different, while the exterior product vanishes if there are $k,l\in\left\{1,\ldots,d-1\right\}$ with $\alpha_{k}=\alpha_{l}$. We calculate
	\begin{align}
		\tr_{\IC^{r'}}\Id\omega^{\wedge d}&=\Ii^{d}\kappa_{\sigma}\sum_{\alpha,\beta\in\mathfrak{S}_{d}}\left(-1\right)^{\left|\alpha\right|+\left|\beta\right|}\p_{x^{\alpha_{1}}}F_{\beta_{1}}\cdot\ldots\cdot\p_{x^{\alpha_{d}}}F_{\beta_{d}}\ \Id x\nonumber\\
		&=\Ii^{d}d!\kappa_{\sigma}\det\mathrm{D}F\ \Id x,
	\end{align}
	where $\mathrm{D}F$ is the Jacobian of $F$. Finally we determine
	\begin{align}
		tr_{\IC^{r'}}\widehat{\omega}\eta\wedge\Id\omega^{\wedge\left(d-1\right)}&=\Ii^{d}\kappa_{\sigma}\sum_{\alpha,\beta\in\mathfrak{S}_{d}}\left(-1\right)^{\left|\alpha\right|+\left|\beta\right|}\frac{F_{\beta_{1}}F_{\alpha_{1}}}{\left|F\right|^2}\p_{\alpha_{1}}F_{\beta_{1}}\cdot\ldots\cdot\p_{\alpha_{d}}F_{\beta_{d}}\ \Id x\nonumber\\
		&=\Ii^{d}\left(d-1\right)!\kappa_{\sigma}\det\mathrm{D}F\ \Id x.
	\end{align}
		We apply $\tr_{\IC^{r'}}$ to (\ref{lemexeq1}), and obtain, expanding $A$ and $B$,
		\begin{align}
			tr_{\IC^{r'}}\left(\left(U^{V}\right)^{-1}\Id U^{V}\right)^{\wedge d}=&-2^{-\frac{d-1}{2}}\Ii^{d-1}\left(d-1\right)!\kappa_{\sigma}\left|F\right|^{-d}\left(\cos\left(2\left|F\right|\right)-1\right)^{\frac{d-1}{2}}\nonumber\\
			&\left(d-1+\left|F\right|\right)\det\mathrm{D}F\ \Id x.
	\end{align}
	Thus, with the choices $r=r'=2^{\frac{d-1}{2}}$, we apply the index formula of Theorem \ref{mainthm},
	\begin{align}
		\ind_{W}D_{V}=\left(2\pi\right)^{-\frac{d+1}{2}}\frac{\left(\frac{d-1}{2}\right)!}{d}\int_{\IR^{d}}\frac{\left(\left|F\right|+d-1\right)\left(\cos\left(2\left|F\right|\right)-1\right)^{\frac{d-1}{2}}}{\left|F\right|^{d}}\det\mathrm{D}F\ \Id x.
	\end{align}
	Note that the integrand extends continuously to $F=0$.
	\end{proof}
	
	The transformation theorem directly implies the following corollaries.
	
	\begin{Corollary}
		Assume additionally to Definition \ref{potentialexdef} that $F:\IR^{d}\rightarrow\Omega\subseteq\IR^{d}$ is a orientation preserving $C^{\infty}$-diffeomorphism. Then
		\begin{align}
			\ind_{W}D_{V}=\left(2\pi\right)^{-\frac{d+1}{2}}\frac{\left(\frac{d-1}{2}\right)!}{d}\int_{\Omega}\frac{\left(\left|y\right|+d-1\right)\left(\cos\left(2\left|y\right|\right)-1\right)^{\frac{d-1}{2}}}{\left|y\right|^{d}}\Id y.
		\end{align}		
	\end{Corollary}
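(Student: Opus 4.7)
The plan is to deduce this directly from Proposition \ref{calclem} by applying the ordinary change of variables formula. First I would observe that the integrand in Proposition \ref{calclem} has the factored structure
\begin{align}
    g(|F(x)|)\det\mathrm{D}F(x),\qquad g(r):=\frac{(r+d-1)(\cos(2r)-1)^{(d-1)/2}}{r^d},
\end{align}
with $g$ extending continuously to $r=0$ (as noted at the end of the proof of Proposition \ref{calclem}). Hence the right-hand side of that proposition equals the fixed prefactor $(2\pi)^{-(d+1)/2}\left(\tfrac{d-1}{2}\right)!/d$ times the integral $\int_{\IR^d}g(|F(x)|)\det\mathrm{D}F(x)\,\Id x$.

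Next I would invoke the substitution $y=F(x)$. Since $F$ is an orientation-preserving $C^\infty$-diffeomorphism of $\IR^d$ onto $\Omega$, we have $\det\mathrm{D}F(x)>0$ pointwise, so $\det\mathrm{D}F(x)\,\Id x$ pushes forward to $\Id y$ on the image $\Omega$, and the transformation theorem converts the above integral into $\int_\Omega g(|y|)\,\Id y$. Reintroducing the prefactor then yields exactly the claimed formula.

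The hard part, if one exists, is the sanity check that absolute integrability holds so that the transformation theorem applies unconditionally. This is immediate: boundedness of $F$ (imposed in Definition \ref{potentialexdef}) forces $\Omega$ to be a bounded open subset of $\IR^d$, and continuity of $g$ on $[0,\infty)$ then gives $g(|\cdot|)\in L^\infty(\Omega)\subset L^1(\Omega)$, so both sides are finite and equal.
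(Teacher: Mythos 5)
Your proposal is correct and matches the paper's own (one-line) argument: the paper derives this corollary directly from Proposition \ref{calclem} via the transformation theorem, exactly as you do. Your added integrability check (boundedness of $F$ forcing $\Omega$ bounded and $g$ continuous at $0$) is a harmless and valid extra precaution.
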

	
	\begin{Corollary}
		Let $\Phi:\IR^{d}\rightarrow\IR^{d}$ be a orientation preserving $C^{\infty}$-diffeomorphism. Then for $V_{\Phi}\left(x,y\right):=V\left(\Phi\left(x\right),y\right)$, we have
		\begin{align}
			\ind_{W}D_{V_{\Phi}}=\ind_{W}D_{V}.
		\end{align}
	\end{Corollary}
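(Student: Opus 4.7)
The plan is to derive the corollary directly from the explicit integral formula in Proposition \ref{calclem}. First observe that
\begin{align*}
V_{\Phi}(x,y)=\omega(\Phi(x))\phi(y)=\Ii\sum_{j=1}^{d}\sigma^{j}(F\circ\Phi)_{j}(x)\,\phi(y),
\end{align*}
so $V_{\Phi}$ is again of the form specified in Definition \ref{potentialexdef}, with the vector field $F\circ\Phi$ in place of $F$. Under the tacit assumption that $F\circ\Phi$ satisfies the decay and angular-limit hypotheses of Definition \ref{potentialexdef}, Proposition \ref{calclem} is applicable to $D_{V_{\Phi}}$ and yields
\begin{align*}
\ind_{W}D_{V_{\Phi}}=(2\pi)^{-\frac{d+1}{2}}\frac{\left(\frac{d-1}{2}\right)!}{d}\int_{\IR^{d}}\frac{(|F\circ\Phi|+d-1)(\cos(2|F\circ\Phi|)-1)^{\frac{d-1}{2}}}{|F\circ\Phi|^{d}}\det\mathrm{D}(F\circ\Phi)\,\Id x.
\end{align*}

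The next step is the transformation theorem. By the chain rule one has $\mathrm{D}(F\circ\Phi)(x)=\mathrm{D}F(\Phi(x))\,\mathrm{D}\Phi(x)$, hence $\det\mathrm{D}(F\circ\Phi)(x)=\det\mathrm{D}F(\Phi(x))\cdot\det\mathrm{D}\Phi(x)$. Performing the substitution $u=\Phi(x)$, the standard change-of-variables formula gives $\Id u=|\det\mathrm{D}\Phi(x)|\,\Id x=\det\mathrm{D}\Phi(x)\,\Id x$, where the second equality uses that $\Phi$ is orientation-preserving. Consequently the integrand in the formula for $\ind_{W}D_{V_{\Phi}}$ pulls back exactly to the integrand in the formula for $\ind_{W}D_{V}$, and the integral over $\IR^{d}$ in the $x$-variable is identified with the integral over $\IR^{d}$ in the $u$-variable; applying Proposition \ref{calclem} once more (now to $V$ itself) yields $\ind_{W}D_{V_{\Phi}}=\ind_{W}D_{V}$.

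The main subtlety, rather than any computation, lies in ensuring that $V_{\Phi}$ genuinely satisfies the running hypotheses: for an arbitrary diffeomorphism $\Phi$ the conditions on $F\circ\Phi$ from Definition \ref{potentialexdef} (which govern the behaviour as $|x|\to\infty$) are not automatic, and one would in practice restrict to $\Phi$ for which $\mathrm{D}\Phi$ and $\mathrm{D}\Phi^{-1}$ are bounded and $|\Phi(x)|\to\infty$ in a controlled way. Once such regularity is assumed, everything reduces to the change-of-variables argument above, and there is no further analytic obstacle.
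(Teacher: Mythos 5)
Your proposal is correct and follows essentially the same route as the paper, which derives this corollary directly from the explicit formula of Proposition \ref{calclem} together with the transformation theorem (chain rule, $\det\mathrm{D}(F\circ\Phi)=(\det\mathrm{D}F\circ\Phi)\det\mathrm{D}\Phi$, and orientation preservation removing the absolute value). Your caveat that $F\circ\Phi$ must still satisfy the decay and angular-limit conditions of Definition \ref{potentialexdef} is a fair observation, one the paper itself passes over silently.
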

	
	\begin{Corollary}\label{finalcor}
		For $s>1$, let $F_{C}\left(x\right):=C\int_{0}^{\left|x\right|}\langle r\rangle^{-s}\Id r\ \frac{x}{\left|x\right|}$, and let $V_{C}\left(x,y\right)=\Ii\phi\left(y\right)\sum_{j=1}\sigma^{j}F_{j}\left(x\right)$ for some $\phi\in C^{\infty}_{c}\left(\IR\right)$ with $\int_{\IR}\phi\left(y\right)\Id y=1$. Then the map,
		\begin{align}
			\IR\ni C\mapsto\ind_{W}D_{V_{C}}\in\IR,
		\end{align}
		is surjective.
	\end{Corollary}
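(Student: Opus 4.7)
The strategy is to apply Proposition \ref{calclem} to the radial potential $V_C$, reduce the resulting index to a one-dimensional integral in the radial variable, and then show that this integral is unbounded with opposite infinite limits as $C\to\pm\infty$; surjectivity then follows from continuity and the intermediate value theorem. As a preliminary step I would verify that $F_C$ satisfies all the hypotheses of Definition \ref{potentialexdef} for every $C\in\IR$. Writing $F_C(x) = h_C(|x|)\hat x$ with $h_C(r) = C\psi(r)$ and $\psi(r) := \int_0^r \langle t\rangle^{-s}\,\Id t$, the function $F_C$ is bounded (since $L_s := \lim_{r\to\infty}\psi(r) < \infty$ by $s>1$), smooth at the origin (because $h_C(r) \sim Cr$ there), and the decay bounds $\partial_{x^i} F_C = O(|x|^{-1})$ and $\partial_R F_C = C\langle|x|\rangle^{-s}\hat x = O(|x|^{-1-\epsilon})$ with $\epsilon = s-1$ are immediate; the radial-limit hypothesis follows from a routine expansion at infinity.

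The main computational step is to evaluate the integrand of Proposition \ref{calclem} on this radial orbit. For a radial map of the form $F_C(x) = h_C(|x|)\hat x$ the differential equals $(h_C(|x|)/|x|)(I - \hat x \hat x^{\top}) + h_C'(|x|)\hat x \hat x^{\top}$, whence
\begin{align*}
\det \mathrm{D}F_C(x) = h_C'(|x|)\left(\frac{h_C(|x|)}{|x|}\right)^{d-1} = C^d\,\langle |x|\rangle^{-s}\,\frac{\psi(|x|)^{d-1}}{|x|^{d-1}}.
\end{align*}
Inserting this into the formula of Proposition \ref{calclem}, passing to polar coordinates (yielding a factor $\vol(S^{d-1})$), using $C^d = \mathrm{sign}(C)\,|C|^d$ (valid because $d$ is odd), and finally substituting $u = |C|\psi(r)$, $\Id u = |C|\langle r\rangle^{-s}\,\Id r$ (which maps $[0,\infty)$ bijectively onto $[0,|C|L_s)$), the formula collapses to
\begin{align*}
\ind_W D_{V_C} = K\cdot\mathrm{sign}(C)\cdot h(|C| L_s),\qquad h(T) := \int_0^T \frac{(u + d-1)(\cos 2u - 1)^{(d-1)/2}}{u}\,\Id u,
\end{align*}
for the positive constant $K = (2\pi)^{-(d+1)/2}\frac{((d-1)/2)!}{d}\vol(S^{d-1})$.

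The decisive observation is that the integrand defining $h$ has constant sign $(-1)^{(d-1)/2}$ on $(0,\infty)$ — because $\cos 2u - 1 \le 0$ is raised to the integer exponent $(d-1)/2$ — is integrable at $0$ for $d\ge 3$ (being $O(u^{d-2})$ there), and for large $u$ is dominated by the $\pi$-periodic piece $(\cos 2u - 1)^{(d-1)/2}$ whose mean over one period is nonzero with the same sign $(-1)^{(d-1)/2}$. Hence $h(T)/T$ converges to a nonzero limit and $h(T) \to (-1)^{(d-1)/2}\cdot\infty$ as $T \to \infty$. Combined with $h(0) = 0$ and the manifest continuity of $C \mapsto \mathrm{sign}(C)\, h(|C| L_s)$ (which extends continuously across $C = 0$ since $h(0) = 0$), the function takes the value $0$ at $C = 0$ and limits of opposite infinite sign as $C \to \pm\infty$; the intermediate value theorem then yields surjectivity onto $\IR$. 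The principal obstacle in executing this plan is the careful sign bookkeeping in the radial reduction, in particular tracking the $\mathrm{sign}(C)$ factor arising from $F_C$ being orientation-reversing for $C < 0$ in odd dimension.
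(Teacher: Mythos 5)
Your proposal is correct and follows essentially the same route as the paper: evaluate the index via Proposition \ref{calclem}, reduce to the one-dimensional integral $\mathrm{sgn}\left(C\right)\int_{0}^{\left|C\right|L_{s}}\frac{\left(u+d-1\right)\left(\cos\left(2u\right)-1\right)^{\frac{d-1}{2}}}{u}\,\Id u$, and conclude from the fixed sign (parity of $\frac{d-1}{2}$), the divergence of this integral as $\left|C\right|\to\infty$, and continuity with value $0$ at $C=0$. The only cosmetic difference is that you compute the radial Jacobian and substitute $u=\left|C\right|\psi\left(r\right)$ by hand, whereas the paper notes that $F_{C}$ is a diffeomorphism onto the ball $B_{r_{C}}\left(0\right)$ and invokes the transformation theorem --- the same change of variables in different dress.
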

	
	\begin{proof}
		For $C=0$ we get trivially $\ind_{W}D_{V_{C}}=0$. For $C\neq 0$ the function $F_{C}$ is a $C^{\infty}$-diffeomorphism of $\IR^{d}$ onto the ball of radius $r_{C}:=C\int_{0}^{\infty}\langle r\rangle^{-s}\Id r$ around $0$, which is checked elementarily. Thus
		\begin{align}
			\mathrm{sgn}\left(\det\mathrm{D}F_{C}\left(x\right)\right)=\mathrm{sgn}\left(\det\mathrm{D}F_{C}\left(0\right)\right)=\mathrm{sgn}\left(C^{d}\right)=\mathrm{sgn}\left(C\right).
		\end{align}
		By Lemma \ref{calclem}, and the transformation theorem we have
		\begin{align}\label{finalcoreq2}
			\ind_{W}D_{V_{C}}&=\left(2\pi\right)^{-\frac{d+1}{2}}\frac{\left(\frac{d-1}{2}\right)!}{d}\mathrm{sgn}\left(C\right)\int_{B_{r_{C}}\left(0\right)}\frac{\left(\left|y\right|+d-1\right)\left(\cos\left(2\left|y\right|\right)-1\right)^{\frac{d-1}{2}}}{\left|y\right|^{d}}\Id y\nonumber\\
			&=\frac{2^{\frac{d-1}{2}}}{d!}\mathrm{sgn}\left(C\right)\int_{0}^{r_{C}}\frac{r+d-1}{r}\left(\cos\left(2r\right)-1\right)^{\frac{d-1}{2}}\Id r,\ C\neq 0.
		\end{align}
		The claimed surjectivity follows by applying the estimate $\cos\left(2r\right)\leq 0$ if $r\in\left[\frac{\pi}{4},\frac{3\pi}{4}\right]+\pi\IZ$ to (\ref{finalcoreq2}), and the fact that the integrand in (\ref{finalcoreq2}) is non-negative or non-positive dependent only on the parity of $\frac{d-1}{2}$.
		\end{proof}
		
	\section{Acknowledgements}
	
	Discussing possible applications of \cite{F2} with Marcus Waurick, the author was made aware of the surjectivity problem for the Witten index on differential operators in dimensions higher than $2$, which gave the reason to construct the presented examples.

\end{document}